\theoremstyle{plain}
\newtheorem{theorem}{Theorem}[section]
\newtheorem{proposition}[theorem]{Proposition}
\newtheorem{lemma}[theorem]{Lemma}
\newtheorem{corollary}[theorem]{Corollary}
\newtheorem{conjecture}[theorem]{Conjecture}
\theoremstyle{definition}
\newtheorem{definition}[theorem]{Definition}
\newtheorem{remark}[theorem]{Remark}
\newtheorem{example}[theorem]{Example}
\newcommand\Z{\mathbb{Z}}
\newcommand\Q{\mathbb{Q}}
\newcommand\R{\mathbb{R}}
\newcommand\T{\mathbb{T}}
\newcommand\N{\mathbb{N}}
\newcommand\B{\mathcal{B}}
\newcommand\X{\mathcal{X}}
\newcommand\Y{\mathcal{Y}}
\newcommand\Scal{\mathcal{S}}
\newcommand\op{{\operatorname{op}}}
\newcommand\Hom{\operatorname{Hom}}
\newcommand\Cond{\operatorname{Cond}}
\newcommand\Aut{\operatorname{Aut}}
\newcommand\Baire{{\mathcal{B}a}}
\newcommand\AbsMes{\mathbf{AbsMbl}}
\newcommand\SigmaAlg{{\mathbf{Bool}_\sigma}}
\newcommand\SigmaAlgop{{\mathbf{Bool}^\op_\sigma}}
\newcommand\ConcMes{\mathbf{CncMbl}}
\newcommand\eps{\varepsilon}
\begin{document}

\title[Uncountable Moore--Schmidt]{An uncountable Moore--Schmidt theorem}

 \author{Asgar Jamneshan}
 \address{Department of Mathematics, Koc University}
 \email{ajamneshan@ku.edu.tr}
 \author{Terence Tao}
 \address{Department of Mathematics, UCLA}
 \email{tao@math.ucla.edu}

\begin{abstract} 
We prove an extension of the Moore--Schmidt theorem on the triviality of the first cohomology class of cocycles for the action of an arbitrary discrete group on an arbitrary measure space and for cocycles with values in an arbitrary  compact Hausdorff abelian group.  
The proof relies on a ``conditional'' Pontryagin duality for spaces of abstract measurable maps.   
\end{abstract}

\maketitle

\setcounter{tocdepth}{1}



\section{Introduction} \label{intro}

\subsection{The countable Moore--Schmidt theorem}

Suppose that $X = (X,\Sigma_X,\mu)$ is a probability space, thus $\Sigma_X$ is a $\sigma$-algebra on $X$ and $\mu \colon \Sigma_X \to [0,1]$ is countably additive with $\mu(X)=1$.  If $Y = (Y,\Sigma_Y)$ is a measurable space and $f \colon X \to Y$ is a measurable map, we define the \emph{pullback map} $f^*: \Sigma_Y \to \Sigma_X$ by
$$ f^* E \coloneqq f^{-1}(E)$$
for $E \in \Sigma_Y$, and then define the \emph{pushforward measure} $f_* \mu$ on $Y$ by the usual formula
$$ f_*\mu(E) \coloneqq \mu(f^* E).$$
For reasons that will become clearer later, we will refer to measurable spaces and measurable maps as \emph{concrete measurable spaces} and \emph{concrete measurable maps} respectively; this creates a category $\ConcMes$.
We define $\Aut(X,\X,\mu)$ to be the space of all concrete invertible bimeasurable maps $T\colon X \to X$ such that $T_* \mu = \mu$; this is a group.
If $\Gamma = (\Gamma,\cdot)$ is a discrete group, we define a \emph{(concrete) measure-preserving action} of $\Gamma$ on $X$ to be a group homomorphism $\gamma \mapsto T^\gamma$ from $\Gamma$ to $\Aut(X,\X,\mu)$.  If $K = (K,+)$ is a compact Hausdorff\footnote{It is likely that the arguments here extend to non-Hausdorff compact groups by quotienting out the closure of the identity element, but the Hausdorff case already captures all of our intended applications and so we make this hypothesis to avoid some minor technical issues.} abelian group, which we endow with the Borel $\sigma$-algebra $\Sigma_K = \B(K)$, we define a \emph{$K$-valued (concrete measurable) cocycle} for this action to be a family $\rho = (\rho_\gamma)_{\gamma \in \Gamma}$ of concrete measurable maps $\rho_\gamma \colon X \to K$ such that for any $\gamma_1, \gamma_2 \in \Gamma$, the cocycle equation
    \begin{equation}\label{cocy}
     \rho_{\gamma_1\gamma_2} = \rho_{\gamma_1} \circ T^{\gamma_2} + \rho_{\gamma_2}
    \end{equation}
holds $\mu$-almost everywhere.  A cocycle $\rho$ is said to be a \emph{(concrete measurable) coboundary} if there exists a concrete measurable map $F: X \to K$ such that for each $\gamma \in \Gamma$, one has
\begin{equation}\label{cobound}
\rho_\gamma = F \circ T^\gamma - F
\end{equation}
 $\mu$-almost everywhere.  Note that \eqref{cobound} (for all $\gamma$) automatically implies \eqref{cocy} (for all $\gamma_1,\gamma_2$), although the converse does not hold in general.

It is of interest to determine the space of all $K$-valued concrete measurable coboundaries.  The following remarkable result of Moore and Schmidt \cite[Theorem 4.3]{moore1980coboundaries} reduces this problem to the case of coboundaries taking values in the unit circle $\T=\R/\Z$, at least under certain regularity hypotheses on the data  $\Gamma,X,K$.  More precisely, let $\hat K$ denote the Pontryagin dual of the compact Hausdorff abelian group $K$, that is to say the space of all continuous homomorphisms $\hat k: k \mapsto \langle \hat k, k \rangle$ from $K$ to  $\T$.

\begin{theorem}[(Countable) Moore--Schmidt theorem]\label{mst}
Let $\Gamma$ be a discrete group acting (concretely) on a probability space $X = (X,\Sigma_X,\mu)$, and let $K$ be a compact Hausdorff abelian group.  Assume furthermore:
\begin{itemize}
    \item[(a)] $\Gamma$ is at most countable.
    \item[(b)] $X = (X,\Sigma_X,\mu)$ is a standard Lebesgue space (thus $X$ is a Polish space, $\Sigma_X$ is the Borel $\sigma$-algebra, and $\mu$ is a probability measure on $\Sigma_X$).
    \item[(c)] $K$ is metrisable.
\end{itemize}
Then a $K$-valued concrete measurable cocycle $\rho = (\rho_\gamma)_{\gamma \in \Gamma}$ on $X$ is a coboundary if and only if the $\T$-valued cocycles $\langle \hat k, \rho\rangle \coloneqq (\langle \hat{k}, \rho_\gamma\rangle)_{\gamma \in \Gamma}$ are coboundaries for all $\hat{k} \in \hat K$. 
\end{theorem}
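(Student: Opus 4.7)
The plan is to exploit Pontryagin duality: since $K$ is metrisable compact Hausdorff abelian, the dual $\hat K$ is a countable discrete abelian group, and under the canonical embedding $K = \widehat{\hat K} \hookrightarrow \T^{\hat K}$ every measurable $F: X \to K$ is encoded by its coordinate functions $(\langle \hat k, F\rangle)_{\hat k \in \hat K}$, which pointwise form a group homomorphism $\hat K \to \T$. The ``only if'' direction is immediate: applying the continuous character $\hat k$ to the identity $\rho_\gamma = F \circ T^\gamma - F$ shows that $\langle \hat k, F\rangle$ cobounds $\langle \hat k, \rho\rangle$.

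For the ``if'' direction, I would first pick, for each $\hat k \in \hat K$, some measurable $F_{\hat k}: X \to \T$ cobounding $\langle \hat k, \rho\rangle$. Two such choices differ by a function which is $T^\gamma$-invariant modulo null sets for each $\gamma$, hence $\Gamma$-invariant almost everywhere by countability of $\Gamma$. Writing $I \subset L^0(X, \T)$ for the subgroup of (almost everywhere) $\Gamma$-invariant functions, this yields a well-defined group homomorphism
\[
\phi: \hat K \to L^0(X, \T)/I, \qquad \phi(\hat k) := [F_{\hat k}].
\]
The crux of the argument is then to lift $\phi$ to a genuine homomorphism $\tilde\phi: \hat K \to L^0(X, \T)$. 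The obstruction lies in $\operatorname{Ext}^1(\hat K, I)$, so it suffices to show that $I$ is divisible (hence injective). But the $n$-multiplication map $\T \to \T$ admits the Borel section $t + \Z \mapsto (t/n) + \Z$ for $t \in [0,1)$; applying this section pointwise to an invariant function produces an $n$-th root that is again invariant, since the section commutes with the $\Gamma$-action.

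Given the lift, set $\tilde F_{\hat k} := \tilde\phi(\hat k)$. By countability of $\hat K \times \hat K$ and of $\hat K \times \Gamma$, there is a single null set $N \subset X$ outside of which
\[
\tilde F_{\hat k_1 + \hat k_2}(x) = \tilde F_{\hat k_1}(x) + \tilde F_{\hat k_2}(x)
\qquad \text{and} \qquad
\tilde F_{\hat k}(T^\gamma x) - \tilde F_{\hat k}(x) = \langle \hat k, \rho_\gamma(x)\rangle
\]
hold for all $\hat k_1, \hat k_2, \hat k \in \hat K$ and $\gamma \in \Gamma$. For each $x \notin N$, the assignment $\hat k \mapsto \tilde F_{\hat k}(x)$ is a homomorphism $\hat K \to \T$, automatically continuous since $\hat K$ is discrete, and so by Pontryagin duality corresponds to a unique $F(x) \in K$; define $F$ arbitrarily on $N$. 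Measurability of $F: X \to K$ follows from measurability of its coordinates via the closed embedding $K \hookrightarrow \T^{\hat K}$, and the coboundary equation $\rho_\gamma = F \circ T^\gamma - F$ then holds almost everywhere by checking characters.

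The hard part will be the lifting step: this is the one place the argument uses a genuine structural input (divisibility of the invariants) rather than simply controlling null sets, and it depends essentially on $\hat K$ being countable and on the existence of a pointwise Borel section of the $n$-multiplication on $\T$. In the uncountable regime the paper actually targets, one presumably needs a very different substitute, since $L^0(X, \T)$ loses any useful separability and a pointwise selection over uncountable $\hat K$ is no longer available without heavy set-theoretic axioms.
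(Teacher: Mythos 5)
Your argument is correct, and its skeleton --- choose transfer functions for each character, correct them so that $\hat k \mapsto \tilde F_{\hat k}$ is a genuine homomorphism into $L^0(X\to\T)$, then use countability of $\hat K$ and $\Gamma$ to shrink to a single null set and apply Pontryagin duality pointwise --- matches the paper's proof of Theorem \ref{mst}. The one step where you genuinely depart from that proof is the correction step. The paper first invokes the ergodic decomposition (this is where hypotheses (a), (b) are actually used) to reduce to the ergodic case, where the $\Gamma$-invariant functions are just the constants $\T$, and then uses divisibility of $\T$ inside $L^0(X\to\T)$ to produce a Zorn retract $w\colon L^0(X\to\T)\to\T$. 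You skip the ergodic decomposition entirely: you work with the full invariant subgroup $I$, prove $I$ is divisible using the Borel section $g_n$ of multiplication by $n$ on $\T$, and conclude the lift exists because $\operatorname{Ext}^1(\hat K, I)=0$. The two mechanisms are the same algebra (injectivity of a divisible subgroup, phrased once as a retract and once as $\operatorname{Ext}^1$-vanishing), but your version is cleaner in that it makes no real use of hypothesis (b), and it is in fact precisely the modification the paper itself adopts in Section \ref{uncountable-sec} when proving the uncountable Theorem \ref{mstu}, where the invariant factor $\Hom(X_\mu\to\T)^\Gamma$ plays the role of your $I$. Your closing diagnosis is also accurate: the pointwise selection over $\hat K$ is the step that dies in the uncountable regime, and the paper's substitute is not a stronger choice principle but the conditional Pontryagin duality of Theorem \ref{cond-pont} carried out in the point-free category.
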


In fact, the results in \cite{moore1980coboundaries} extend to the case when $\Gamma$ and $K$ are locally compact groups (which are now assumed to be second countable instead of countable), and $(\langle \hat{k}, \rho_\gamma\rangle)_{\gamma \in \Gamma}$ is only assumed to be a coboundary for almost all $\hat{k} \in K$ with respect to some ``full'' measure.  We will not discuss such extensions of this theorem here, but mention that the original proof by Moore and Schmidt at this level of generality crucially relies on measurable selection theorems.     

The Moore--Schmidt theorem is a beautiful classification result which serves as a relevant technical tool in ergodic theory and probability.  
It formulates a condition for the triviality of the first cohomology class of cocycles - an important invariant of measure-theoretic actions of groups - by describing the size of the set of characters necessary and sufficient to test triviality.  
It is particularly helpful for understanding  the structure of cocycles. See e.g., \cite{host2005nonconventional,bergelson2010inverse,austin2015pleasant1} for applications in the structure theory of nonconventional ergodic averages of multiple recurrence type, \cite{aaronson2001local,gouezel2005berry} for applications to limit theorems in probability, and \cite{schmidt1980asymptotically,moore1979groups,bergelson2014rigidity,helson1986cocycles} for some applications in other classification and asymptotic results in ergodic theory. 

We briefly sketch here a proof of Theorem \ref{mst}.  Using the ergodic decomposition \cite{furstenberg2014recurrence} (which takes advantage of the hypotheses (a), (b)) we may assume without loss of generality that the action is ergodic.  By definition, for each $\hat k \in \hat K$ there exists a realization $\alpha_{\hat k}$ of an element of the group $L^0(X; \T)$ of concrete measurable functions from $X$ to $\T$, modulo $\mu$-almost everywhere equivalence, such that
\begin{equation}\label{hatk}
 \langle \hat{k}, \rho_\gamma \rangle = \alpha_{\hat k} \circ T^{\gamma} - \alpha_{\hat k}
 \end{equation}
$\mu$-almost everywhere.  For any $\hat k_1, \hat k_2 \in \hat K$, one sees from comparing \eqref{hatk} for $\hat k_1, \hat k_2, \hat k_1 + \hat k_2$ that the function $\alpha_{\hat k_1 + \hat k_2} - \alpha_{\hat k_1} - \alpha_{\hat k_2}$ is $\Gamma$-invariant up to $\mu$-almost sure equivalence, and hence equal in $L^0(X; \T)$ to a constant $c(\hat k_1, \hat k_2) \in \T$, by the ergodicity hypothesis.  Viewing $\T$ as a divisible\footnote{That is, for any $x \in \T$ and $n \in \N$, there exists $y \in \T$ such that $ny=x$.}  subgroup of the abelian group $L^0(X; \T)$, a routine application of Zorn's lemma\footnote{We freely assume the axiom of choice in this paper.} (see e.g.,  \cite[p.~46--47]{halmos2013lectures}) then lets us obtain a retract homomorphism $w: L^0(X; \T) \to \T$.   If we define the modified function $\tilde \alpha_{\hat k} \coloneqq \alpha_k - w(\alpha_k)$ then we have $\tilde \alpha_{\hat k_1 + \hat k_2} = \tilde \alpha_{\hat k_1} + \tilde \alpha_{\hat k_2}$ $\mu$-almost everywhere for each $\hat k_1, \hat k_2 \in \hat K$.  By hypothesis (c), $\hat K$ is at most countable, hence for $\mu$-almost every point $x \in X$, the map $x \mapsto \tilde \alpha_{\hat k}(x)$ is a homomorphism from $\hat K$ to $\T$, and hence by Pontryagin duality takes the form $\tilde \alpha_{\hat k}(x) = \langle \hat k, F(x) \rangle$ for some $\mu$-almost everywhere defined map $F \colon X \to K$, which one can verify to be measurable.  One can then check that
$$ \rho_\gamma = F \circ T^\gamma - F $$
$\mu$-almost everywhere, giving the claim.

\subsection{The uncountable Moore--Schmidt theorem}

The hypotheses (a), (b), (c) were used in the above proof, but one can ask if they are truly necessary for Theorem \ref{mst}.  Thus, we can ask whether the Moore--Schmidt theorem holds for actions of uncountable discrete groups $\Gamma$ on spaces $X$ that are not standard Lebesgue, with cocycles taking values in groups $K$ that are compact Hausdorff abelian, but not necessarily metrizable.  We refer to this setting as the ``uncountable'' setting for short, in contrast to the ``countable'' setting in which hypotheses such as (a), (b), (c) are imposed.  Our motivation for this is to remove similar regularity hypotheses from other results in ergodic theory, such as the Host-Kra structure theorem \cite{host2005nonconventional}, which rely at one point on the Moore--Schmidt theorem.  This in turn is motivated by the desire to apply such structure theory to such situations as actions of hyperfinite groups on spaces equipped with Loeb measure, which (as has been seen in such work as \cite{szegedy2012higher}, \cite{gtz}) is connected with the inverse conjecture for the Gowers norms in additive combinatorics.  We plan to address these applications in future work.

Unfortunately, a naive attempt to remove the hypotheses from Theorem \ref{mst} leads to counterexamples.  The main difficulty is the \emph{Nedoma pathology}: Once the compact Hausdorff abelian group $K$ is no longer assumed to be metrizable, the product Borel $\sigma$-algebra $\B(K) \otimes \B(K)$ can be strictly smaller than the Borel $\sigma$-algebra $\B(K \times K)$, and the group operation $+: K \times K \to K$, while still continuous, can fail to be measurable when $K \times K$ is equipped with the product $\sigma$-algebra $\B(K) \otimes \B(K)$: see Remark \ref{red-counter}.  As a consequence, one cannot even guarantee that the sum $f+g$ of two measurable functions $f,g \colon X \to K$ remains measurable, and so even the very definition of a $K$-valued measurable cocycle or coboundary becomes problematic if one insists on endowing $K$ with the Borel $\sigma$-algebra $\B(K)$.

Two further difficulties, of a more technical nature, also arise.  One is that if $X$ is no longer assumed to be standard Lebesgue, then tools such as disintegration may no longer be available; one similarly may lose access to measurable selection theorems when $K$ is not metrizable.  The other is that if $\Gamma$ is allowed to be uncountable or $K$ is allowed to be non-metrizable, then one may have to manipulate an uncountable number of assertions that each individually hold $\mu$-almost everywhere, but for which one cannot ensure that they \emph{simultaneously} hold $\mu$-almost everywhere, because the uncountable union of null sets need not be null.

To avoid these difficulties, we will make the following modifications to the setup of the Moore--Schmidt theorem, which turn out to be natural changes to make in the uncountable setting. 
The most important change, which is needed to avoid the Nedoma pathology, is to coarsen the $\sigma$-algebra on the compact group $K$, from the Borel $\sigma$-algebra to the Baire $\sigma$-algebra (see e.g.~\cite[Volume 2]{bogachev2006measure} for a reference):

\begin{definition}[Baire $\sigma$-algebra]\label{reduced-def}  If $K$ is a compact space, we define the \emph{Baire $\sigma$-algebra} $\Baire(K)$ to be the $\sigma$-algebra generated by all the continuous maps $f: K \to \R$. We use $K_\Baire$ to denote the concrete measurable space $K_\Baire = (K, \Baire(K))$.
\end{definition}

Since every closed subset $F$ of a compact metric space $S$ is the zero set of a real-valued continuous function $x \mapsto \mathrm{dist}(x,F)$, we see that the Baire $\sigma$-algebra $\Baire(K)$ of a compact space $K$ can equivalently be defined as the $\sigma$-algebra generated by all the continuous maps into compact metric spaces; another equivalent definition of $\Baire(K)$ is the $\sigma$-algebra generated by closed $G_\delta$ sets. Clearly $\Baire(K)$ is a subalgebra of $\B(K)$ which is equal to $\B(K)$ when $K$ is metrizable.  However, it can be strictly smaller; see Remark \ref{red-counter}.  In Proposition \ref{group-mes} we will show that if $K$ is a compact Hausdorff group, then the group operations on $K$ are measurable on $K_\Baire$, even if they need not be on $K$.  For this and other reasons, we view $K_\Baire$ as the ``correct'' measurable space structure to place on $K$ when $K$ is not assumed to be metrizable.  The observation that the Baire $\sigma$-algebra is generally better behaved than the Borel $\sigma$-algebra in uncountable settings is well known; see for instance \cite[\S 5.2]{EFHN}.

To avoid the need to rely on disintegration and measurable selection, and to avoid situations where we take uncountable unions of null sets, we shall  adopt a ``point-less'' or ``abstract'' approach to measure theory, by replacing concrete measurable spaces $(X,{\mathcal X})$ with their abstract counterparts.  Namely:

\begin{definition}[Abstract measurable spaces] The category $\AbsMes = \SigmaAlgop$ of abstract measurable spaces is the opposite\footnote{This is analogous to how the category of Stone spaces is equivalent to the opposite category of Boolean algebras, or how the category of affine schemes is equivalent to the opposite category of the category of commutative rings. One could also adopt a noncommutative probability viewpoint, and interpret  the category of abstract probability spaces as the opposite category to the category of tracial commutative von Neumann algebras, but we will not need to do so in this paper.} category of the category $\SigmaAlg$ of $\sigma$-complete Boolean algebras (or \emph{abstract $\sigma$-algebras}).  That is to say, an abstract measurable space (i.e., an object in $\AbsMes$) is a Boolean algebra $\X = (\X, 0, 1, \wedge, \vee, \overline{\cdot})$ is a Boolean algebra that is $\sigma$-complete (all countable families have meets and joins), and an abstract measurable map $f \in \Hom_{\AbsMes}(\X;\Y)$ (i.e., a morphism in $\AbsMes$) from one abstract measurable space $\X$ to another $\Y$ is a formal object of the form $f = (f^*)^{\mathrm{op}}$, where $f^*: \Y \to \X$ is a $\sigma$-complete homomorphism, that is to say a Boolean algebra homomorphism that also preserves countable joins: $f^* \bigvee_{n=1}^\infty E_n = \bigvee_{n=1}^\infty f^* E_n$ for all $E_n \in \Y$.  We refer to $f^*$ as the \emph{pullback map} associated to $f$.  Here $\op$ is a formal symbol to indicate use of the opposite category; the space $\Hom_{\AbsMes}(\X; \Y)$ is thus in one-to-one correspondence with the space $\Hom_{\SigmaAlg}(\Y; \X)$ of $\sigma$-complete Boolean homomorphisms from $\Y$ to $\X$.  If $f \in \Hom_\AbsMes( \X; \Y)$ and $g \in \Hom_\AbsMes( \Y; \mathcal{Z} )$ are abstract measurable maps, the composition $g \circ f \in \Hom_\AbsMes( \X; \mathcal{Z})$ is defined by the formula $g \circ f \coloneqq (f^* \circ g^*)^\op$ (or equivalently $(g \circ f)^* = f^* \circ g^*$).  Elements of the $\sigma$-complete Boolean algebra $\X$ will be also be referred to as \emph{abstract measurable subsets} of $\X$.

\end{definition}

We study the category of abstract measurable spaces in more detail in the followup paper \cite{jt-foundational}.

Note that any (concrete) measurable space $(X,\Sigma_X)$ can be viewed as an abstract measurable space by viewing the $\sigma$-algebra $\Sigma_X$ as a $\sigma$-complete Boolean algebra in the obvious manner (replacing set-theoretic symbols such as $\emptyset, X, \cup, \cap$ with their Boolean algebra counterparts $0, 1, \vee, \wedge$) and identifying $(X,\Sigma_X)$ (by some abuse of notation) with $\Sigma_X$, and similarly any (concrete) measurable map $f \colon X \to Y$ between two measurable spaces $(X,\Sigma_X), (Y,\Sigma_X)$ can be viewed as an abstract measurable map in $\Hom_\AbsMes(X; Y) = \Hom_\AbsMes(\Sigma_X; \Sigma_Y)$ by identifying $f$ with $(f^*)^\op$, where $f^* \colon \Sigma_Y \to \Sigma_X$ is the pullback map.  By abuse of notation, we shall frequently use these identifications in the sequel without further comment.  One can then easily check that the category $\ConcMes$ of concrete measurable spaces is a subcategory of the category $\AbsMes$ of abstract measurable spaces (in particular, the composition law for concrete measurable maps is consistent with that for abstract measurable maps).  

\begin{example}\label{ultra}  Let $\mathrm{pt}$ be a point (with the discrete $\sigma$-algebra); this is a concrete measurable space, which is identified with the abstract measurable space given by the $\sigma$-complete Boolean algebra $2^{\mathrm{pt}} = \{0,1\}$.  Then $\Hom_\AbsMes(\mathrm{pt}; \N)$ can be identified with $\N$ (with every natural number $n$ giving an abstractly measurable map $n \in \Hom_\AbsMes(\mathrm{pt}; \N) \equiv \Hom_{\SigmaAlg}(2^\N; \{0,1\})$ defined by $n^* E = 1_{n \in E}$ for $E \subset \N$).
\end{example}

An important further  example for us of an abstract measurable space (that is not, in general, represented by a concrete measurable space) will be as follows.  If $(X,\Sigma_X,\mu)$ is a measure space, we define the \emph{(opposite) measure algebra} $X_\mu$ to be the abstract measurable space $\Sigma_X/\mathcal{N}_\mu$, where $\mathcal{N}_\mu \coloneqq \{ A \in \X: \mu(A)=0\}$ is the $\sigma$-ideal of $\mu$-null sets, thus the abstract measurable subsets of $X_\mu$ are equivalence classes $[A] \coloneqq \{ A' \in \X: A \Delta A' \in \mathcal{N}_\mu \}$ for $A \in \X$.  We call $[A]$ the \emph{abstraction} of $A$ and $A$ a \emph{representative} of $[A]$.  
 
Informally, the measure algebra $X_\mu$ is formed from $X$ by ``removing the null sets'' (without losing any sets of positive measure); this is an operation that does not make sense on the level of concrete measurable spaces, but is perfectly well defined in the category of abstract measurable spaces.  The measure $\mu$ can be viewed as a countably additive map from the measure algebra $X_\mu$ to $[0,+\infty]$.  There is an obvious ``inclusion map'' $\iota \in \Hom_\AbsMes(X_\mu; X) \equiv \Hom_{\SigmaAlg}(\Sigma_X; \Sigma_X/\mathcal{N}_\mu)$, which is the abstract measurable map defined by setting $\iota^* A \coloneqq [A]$ for all $A \in \X$; this is a monomorphism in the category of abstract measurable spaces.  

If $f \colon X \to Y$ is a concrete measurable map, we refer to $[f] \coloneqq \iota \circ f \in \Hom_{\AbsMes}(X_\mu; Y)$ as the \emph{abstraction} of $f$, and $f$ as a \emph{realization} of $[f]$; chasing all the definitions, we see that $[f]^* E = [f^* E]$ for all measurable subsets $E$ of $Y$.  Note that if $f \colon X \to Y$, $g \colon X \to Y$ are concrete measurable maps that agree $\mu$-almost everywhere, then $[f] = [g]$.  The converse is only true in certain cases: see Section \ref{condrep-sec}.  Furthermore, there exist abstract measurable maps in $\Hom_{\AbsMes}(X_\mu; Y)$ that have no realizations as concrete measurable maps from $X$ to $Y$; again, see Section \ref{condrep-sec}.  As such, $\Hom_{\AbsMes}(X_\mu; Y)$ is not equivalent in general to the space $L^0(X; Y)$ of concrete measurable maps from $X$ to $Y$ up to almost everywhere equivalence, although the two spaces are still analogous in many ways.  Our philosophy is that $\Hom_{\AbsMes}(X_\mu; Y)$ is a superior replacement for $L^0(X; Y)$ in uncountable settings, as it exhibits fewer pathologies; for instance it behaves well with respect to arbitrary products, as seen in Proposition \ref{product}, whereas $L^0(X; Y)$ does not (see Example \ref{bad-ex}).  The main drawback of working with $X_\mu$ is the inability to use ``pointwise'' arguments; however, it turns out that most of the tools we really need for our applications can be formulated without reference to points.  (Here we follow the philosophy of ``conditional set theory'' as laid out in \cite{drapeau2016algebra}.)

\begin{example}\label{pointless} Let $X$ be the unit interval $[0,1]$ with the Borel $\sigma$-algebra and Lebesgue measure $\mu$.  Then $\Hom_{\AbsMes}(\mathrm{pt}; X_\mu)$ can be verified to be empty.  Thus $X_\mu$ contains no ``points'', which explains why one cannot use ``pointwise'' arguments when working with $X_\mu$ as a base space.  Note this argument also shows that $X_\mu$ is not isomorphic to a concrete measurable algebra.
\end{example}

Define $\Aut(X_\mu)$ to be the group of invertible elements $T = (T^*)^\op$ of $\Hom_{\AbsMes}(X_\mu; X_\mu)$. Any element of $\Aut(X,\X,\mu)$ can be abstracted to an element of $\Aut(X_\mu)$; in fact the abstraction lies in the subgroup $\Aut(X_\mu,\mu)$ of $\Aut(X_\mu)$ consisting of maps $T$ that also preserve the measure, $T_* \mu = \mu$, but we will not need this measure-preservation property in our formulation of the Moore--Schmidt theorem.  We also remark that there can exist elements of $\Aut(X_\mu,\mu)$ that are not realized\footnote{For a simple example, let $X = \{1,2,3\}$, let $\X$ be the $\sigma$-algebra generated by $\{1\}, \{2,3\}$, and let $\mu$ assign an equal measure of $1/2$ to $\{1\}$ and $\{2,3\}$.  Then there is an element of $\Aut(X_\mu,\mu)$ that interchanges the equivalence classes of $\{1\}$ and $\{2,3\}$, but it does not arise from any element of $\Aut(X,\X,\mu)$.  One can also modify Example \ref{nonrep} to generate further examples of non-realizable abstract measure-preserving maps; we leave the details to the interested reader.} by a concrete element of $\Aut(X,\X,\mu)$.  We believe that $\Aut(X_\mu)$ (or $\Aut(X_\mu,\mu)$) is a more natural replacement for $\Aut(X,\X,\mu)$ in the case when $X$ is not required to be standard Lebesgue.  An \emph{abstract action} of a discrete (and possibly uncountable) group $\Gamma$ on $X_\mu$ is defined to be a group homomorphism $\gamma \mapsto T^\gamma$ from $\Gamma$ to $\Aut(X_\mu)$.  Clearly any concrete measure-preserving action of $\Gamma$ on $X$ also gives rise to an abstract measure-preserving action on $X_\mu$, but there are abstract actions that are not represented by any concrete one (even if one is willing to work with ``near-actions'' in which the composition law $T^{\gamma_1} \circ T^{\gamma_2} = T^{\gamma_1 \gamma_2}$ only holds almost everywhere rather than everywhere).

If $(X,\X,\mu)$ is a probability space (not necessarily standard Lebesgue) and $K$ is a compact abelian group (not necessarily metrizable), then the measurable nature of the group operations on $K_\Baire$ makes the space $\Hom_{\AbsMes}(X_\mu; K_\Baire )$ an abelian group: see Section \ref{pont-sec}.  If $\Gamma$ is a (possibly uncountable) discrete group acting abstractly on $X_\mu$, we define an \emph{abstract $K$-valued cocycle} to be a collection $\rho = (\rho_\gamma)_{\gamma \in \Gamma}$ of abstract measurable maps $\rho_\gamma \in \Hom_{\AbsMes}(X_\mu; K_\Baire )$ such that
$$ \rho_{\gamma_1\gamma_2} = \rho_{\gamma_1} \circ T^{\gamma_2} + \rho_{\gamma_2}$$
for all $\gamma_1,\gamma_2 \in \Gamma$.  Note in comparison to \eqref{cocy} that we no longer need to introduce the caveat ``$\mu$-almost everywhere''.  We say that an abstract $K$-valued cocycle is an \emph{abstract coboundary} if there is an abstract measurable map $F \in \Hom_{\AbsMes}(X_\mu; K_\Baire )$ such that
$$ \rho_\gamma = F \circ T^\gamma - F $$
for all $\gamma \in \Gamma$.  

With these preliminaries, we are finally able to state the uncountable analogue of the Moore--Schmidt theorem. As a minor generalization, we can also allow $(X,\X,\mu)$ to be an arbitrary measure space rather than a probability space; in particular, $(X,\X,\mu)$ is no longer required to be $\sigma$-finite, again in the spirit of moving away from ``countably complicated'' settings.

\begin{theorem}[Uncountable Moore--Schmidt theorem]\label{mstu}
Let $\Gamma$ be a discrete group acting abstractly on the measure algebra $X_\mu$ (viewed as an abstract measurable space) of a measure space $X = (X,\X,\mu)$, and let $K$ be a compact Hausdorff abelian group.  Then an abstract $K$-valued cocycle $\rho = (\rho_\gamma)_{\gamma \in \Gamma}$ on $X_\mu$ is an abstract coboundary if and only if the $\T$-valued abstract cocycles $\hat k \circ \rho \coloneqq (\hat{k}\circ \rho_\gamma)_{\gamma \in \Gamma}$ are abstract coboundaries for all $\hat{k} \in \hat K$.
\end{theorem}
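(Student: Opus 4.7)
The ``only if'' direction is immediate: if $\rho_\gamma = F\circ T^\gamma - F$ with $F \in \Hom(X_\mu \to K_\Baire)$, then composing with any $\hat k \in \hat K$ (viewed as an abstract measurable map $K_\Baire \to \T_\Baire$ via continuity) yields $\hat k\circ \rho_\gamma = (\hat k\circ F)\circ T^\gamma - (\hat k\circ F)$, so $\hat k\circ F$ is the required coboundary witness.

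The ``if'' direction is the heart of the matter. My plan is to follow the skeleton of the countable proof, replacing ergodic decomposition, pointwise Pontryagin duality, and uncountable-union juggling of null sets by point-less, category-theoretic counterparts. Using the axiom of choice, pick for each $\hat k \in \hat K$ some $\alpha_{\hat k} \in \Hom(X_\mu \to \T_\Baire)$ with $\hat k\circ \rho_\gamma = \alpha_{\hat k}\circ T^\gamma - \alpha_{\hat k}$. Because the cocycle equation for $\rho$ is now an honest identity in $\Hom(X_\mu\to K_\Baire)$ rather than one modulo null sets, comparing coboundary equations for $\hat k_1, \hat k_2$ and $\hat k_1+\hat k_2$ immediately shows that the additivity defect
\[ c(\hat k_1,\hat k_2) := \alpha_{\hat k_1+\hat k_2} - \alpha_{\hat k_1} - \alpha_{\hat k_2} \]
lies entirely in the $\Gamma$-invariant subgroup $A := \Hom(X_\mu \to \T_\Baire)^\Gamma$ (no ``almost everywhere'' qualifier needed, which is precisely the point of the abstract formulation).

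The next step is to normalize $\hat k\mapsto\alpha_{\hat k}$ into a genuine group homomorphism. In the countable proof, ergodicity collapses the $\Gamma$-invariants to the constant group $\T$, and divisibility of $\T$ lets Zorn's lemma produce a retract $\Hom(X \to \T)\to \T$. Here, since $\T$ is metrizable (so $\Baire(\T) = \B(\T)$) every abstract $\T_\Baire$-valued map can be divided by any positive integer $n$: partition the base by the preimages of $n$ congruent arcs of $\T$ and locally lift to $\R/\Z$. This makes $\Hom(X_\mu \to \T_\Baire)$ and hence its $\Gamma$-invariant subgroup $A$ divisible, therefore injective in the category of abelian groups. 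A routine Zorn's lemma argument then yields a homomorphic retract $w\colon \Hom(X_\mu \to \T_\Baire) \to A$ with $w|_A = \mathrm{id}$. Setting $\tilde\alpha_{\hat k} := \alpha_{\hat k} - w(\alpha_{\hat k})$ trivializes the defect $c$ and produces a bona fide homomorphism $\hat k \mapsto \tilde\alpha_{\hat k}$; the cohomological identity $\hat k\circ\rho_\gamma = \tilde\alpha_{\hat k}\circ T^\gamma - \tilde\alpha_{\hat k}$ is preserved because $w(\alpha_{\hat k}) \in A$ is $\Gamma$-invariant and therefore cancels.

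The culminating step, and the main obstacle, is the conditional Pontryagin duality advertised in the abstract: every group homomorphism $\hat K \to \Hom(X_\mu \to \T_\Baire)$ should arise in the form $\hat k \mapsto \hat k\circ F$ for a unique $F \in \Hom(X_\mu \to K_\Baire)$. Applied to $\hat k\mapsto\tilde\alpha_{\hat k}$, this produces the candidate $F$, and the identity $\hat k\circ\rho_\gamma = \hat k\circ(F\circ T^\gamma - F)$ for every $\hat k\in\hat K$ promotes, by the separation of morphisms into $K_\Baire$ by $\hat K$ (itself a corollary of the same duality), to the desired $\rho_\gamma = F\circ T^\gamma - F$. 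Establishing the conditional Pontryagin duality — which must simultaneously handle the uncountability of $\hat K$, the non-metrizability of $K$, and the absence of points in $X_\mu$ — is where the genuinely new work lies; once it is in place, the remainder merely rehearses the classical outline inside the category of abstract measurable maps.
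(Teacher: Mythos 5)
Your proposal follows essentially the same route as the paper's own proof: produce transfer functions $\alpha_{\hat k}$, observe that the additivity defect $c(\hat k_1,\hat k_2)$ lands in the $\Gamma$-invariant subgroup, use divisibility of that subgroup and a Zorn's-lemma retract $w$ to normalize $\hat k\mapsto\tilde\alpha_{\hat k}$ into a genuine homomorphism, and then invoke conditional Pontryagin duality (the paper's Theorem \ref{cond-pont}) to realize it as $\hat k\mapsto\langle\hat k,F\rangle$ and conclude via the injectivity of $\Cond(\iota)$. The only nit is that divisibility of $\Hom(X_\mu\to\T)$ does not formally pass to the subgroup $A=\Hom(X_\mu\to\T)^\Gamma$; but your own construction --- composing an invariant $\theta$ with a fixed measurable $n$-th-root map $g_n\colon\T\to\T$ --- produces an $n$-th root that is automatically invariant, which is exactly how the paper establishes divisibility of the invariant subgroup directly.
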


We prove this result in Section \ref{uncountable-sec}; the key tool is a ``conditional'' version of the Pontryagin duality relationship between $K$ and $\hat K$, which we formalize as Theorem \ref{cond-pont}.  Once this result is available, the proof mimics the proof of the countable Moore--Schmidt theorem, translated to the abstract setting.  We avoid the use of the ergodic decomposition by replacing the role of the scalars $\T$ by the invariant factor $\Hom_{\AbsMes}(X_\mu; \T)^\Gamma$.

While we believe that the formalism of abstract measure spaces is the most natural one for this theorem, one can still explore the question of to what extent Theorem \ref{mstu} continues to hold if one works with concrete actions, cocycles, and coboundaries instead of abstract ones.  We do not have a complete answer to this question, but we give some partial results in Sections \ref{condrep-sec}, \ref{concrete-sec}; in particular we recover Theorem \ref{mst} as a corollary of Theorem \ref{mstu}. 

\begin{remark} If $\Scal$ is an arbitrary abstract measurable space, then by the Loomis-Sikorski theorem \cite{loomis1947, sikorski2013boolean} $\Scal$ is isomorphic to $\X/{\mathcal N}$ for some concrete measurable space $(X,\X)$ and some null ideal ${\mathcal N}$ of $\X$.  In particular $\Scal$ is isomorphic to $X_\mu$, where $\mu$ is the (non-$\sigma$-finite) measure on $X$ that assigns $0$ to elements of ${\mathcal N}$ and $+\infty$ to all other elements.  Thus in Theorem \ref{mstu} one can replace the measure algebra $X_\mu$ by an arbitrary abstract measurable space.
\end{remark}

\subsection{Notation}

For any unexplained definition or result in the theory of measure algebras, we refer the interested reader to \cite{fremlin1989measure}, and for any unexplained definition or result in the general theory of Boolean algebras to \cite[Part 1]{monk1989handbook}. 

If $S$ is a statement, we use $1_S$ to denote its indicator, equal to $1$ when $S$ is true and $0$ when $S$ is false.  (In some cases, $1$ and $0$ will be interpreted as elements of a Boolean algebra, rather than as numbers.)

\subsection{Acknowledgments}
AJ was supported by DFG-research fellowship JA 2512/3-1. 
TT was supported by a Simons Investigator grant, the James and Carol Collins Chair, the Mathematical Analysis \& Application Research Fund Endowment, and by NSF grant DMS-1764034.

\section{The Baire $\sigma$-algebra}\label{reduced-sec}

In this section we explore some properties of the measurable spaces $K_\Baire = (K, \Baire(K))$  defined in Definition \ref{reduced-def}. We have already observed that $\Baire(K) = \B(K)$ when $K$ is a metric space.  The Baire $\sigma$-algebra also interacts well with products:

\begin{lemma}[Baire $\sigma$-algebras and products]\label{subspace-red}  Let $K$ be a closed subspace of a product $S_A \coloneqq \prod_{\alpha \in A} S_\alpha$ of compact spaces $S_\alpha$.  Then $\Baire(K)$ is the restriction of the product $\sigma$-algebra $\B_A \coloneqq \bigotimes_{\alpha \in A} \Baire(S_\alpha)$ to $K$:
$$ \Baire(K) = \{ E \cap K: E \in \B_A \}.$$
Equivalently, $\Baire(K)$ is the $\sigma$-algebra generated by the coordinate projections $\pi_\alpha \colon K \to (S_\alpha)_\Baire$, $\alpha \in A$.
\end{lemma}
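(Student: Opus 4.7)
The plan is to prove both inclusions, using the equivalence of the two descriptions of $\Baire(K)$ in the statement. Let $\mathcal{C}$ denote the $\sigma$-algebra on $K$ generated by the coordinate projections $\pi_\alpha|_K \colon K \to S_\alpha$ for $\alpha \in A$. By construction of the product $\sigma$-algebra $\B_A$ as the smallest $\sigma$-algebra making all the $\pi_\alpha$ measurable, one has $\mathcal{C} = \{E \cap K : E \in \B_A\}$, so the two descriptions in the statement are tautologically equivalent, and it suffices to prove $\Baire(K) = \mathcal{C}$.

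First I would handle the inclusion $\mathcal{C} \subset \Baire(K)$. Each projection $\pi_\alpha|_K$ is continuous into the compact metric space $S_\alpha$, so by the alternative description of $\Baire(K)$ noted right after Definition \ref{reduced-def} (the $\sigma$-algebra generated by continuous maps into compact metric spaces), every preimage $(\pi_\alpha|_K)^{-1}(B)$ with $B \in \B(S_\alpha)$ lies in $\Baire(K)$. This gives $\mathcal{C} \subset \Baire(K)$ immediately.

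The content is the reverse inclusion $\Baire(K) \subset \mathcal{C}$, and here the key tool is a classical fact that I expect to be the main obstacle: \emph{any continuous real-valued function on a compact subspace $K$ of a product $\prod_{\alpha \in A} S_\alpha$ of metric spaces depends on only countably many coordinates}. The argument I would give is this: equip $S_A$ with the standard product uniformity (where entourages are generated by requiring $d_\alpha(x_\alpha, y_\alpha) < \delta$ for all $\alpha$ in a finite set). A continuous function $f \colon K \to \R$ is then uniformly continuous on the compact uniform space $K$, so for each $n$ there is a finite set $A_n \subset A$ and $\delta_n > 0$ with $|f(x) - f(y)| < 1/n$ whenever $d_\alpha(\pi_\alpha(x), \pi_\alpha(y)) < \delta_n$ for all $\alpha \in A_n$. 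Setting $A_0 \coloneqq \bigcup_n A_n$ (a countable subset of $A$), one sees $f(x) = f(y)$ whenever $\pi_{A_0}(x) = \pi_{A_0}(y)$, so $f$ factors as $f = g \circ (\pi_{A_0}|_K)$ for some map $g$ on the compact image $L \coloneqq \pi_{A_0}(K) \subset \prod_{\alpha \in A_0} S_\alpha$; since $\pi_{A_0}|_K$ is a continuous surjection from a compact space onto a Hausdorff space and thus a quotient map, $g$ is continuous.

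To finish, observe that $\prod_{\alpha \in A_0} S_\alpha$ is a compact metric space (countable product of metric spaces), so its Borel $\sigma$-algebra coincides with $\bigotimes_{\alpha \in A_0} \B(S_\alpha)$. Thus for any Borel $B \subset \R$, the set $g^{-1}(B)$ can be written as $E' \cap L$ for some $E' \in \bigotimes_{\alpha \in A_0} \B(S_\alpha)$, and consequently $f^{-1}(B) = \pi_{A_0}^{-1}(E') \cap K$ lies in $\mathcal{C}$. Hence every continuous $f \colon K \to \R$ is $\mathcal{C}$-measurable, which yields $\Baire(K) \subset \mathcal{C}$ and completes the proof. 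The delicate step above is the factoring through finitely many coordinates via uniform continuity; I would make sure to explicitly invoke compactness of $K$ to justify uniform continuity with respect to the product uniformity, as this is the place where the ``closed subspace'' hypothesis really enters.
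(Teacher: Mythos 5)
Your proof is correct, but the hard inclusion $\Baire(K)\subset\{E\cap K:E\in\B_A\}$ is handled by a genuinely different mechanism than in the paper. The paper argues directly at the level of sets: for a continuous $f\colon K\to Y$ into a compact metric space, it squeezes the compact preimage $f^{-1}(\overline{B_Y(y,r)})$ between a finite union of basic open sets of $S_A$ (extracted by compactness from an open cover representing $f^{-1}(B_Y(y,r+\frac1n))\cap K$) and the slightly larger open preimage, then intersects over $n$; no factorization of $f$ is ever produced. You instead prove the stronger structural statement that every continuous real-valued function on $K$ factors continuously through the projection onto a countable subproduct, using uniform continuity on the compact uniform space $K$ and the fact that $\pi_{A_0}|_K$ is a closed (hence quotient) map, and then finish by the standard identity $\B(\prod_{\alpha\in A_0}S_\alpha)=\bigotimes_{\alpha\in A_0}\B(S_\alpha)$ for countable products of second countable spaces. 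Your route buys a cleaner conceptual takeaway --- every Baire set of $K$ depends on only countably many coordinates, a fact the paper repeatedly exploits elsewhere (e.g.\ in Remark \ref{red-counter} and Proposition \ref{condk-desc}) --- at the cost of importing the theory of uniform spaces (the product uniformity induces the product topology; continuous maps on compact uniform spaces are uniformly continuous), whereas the paper's squeeze argument is more elementary and self-contained. All the delicate steps you flag check out: compactness of $K$ does follow from closedness in the Tychonoff-compact $S_A$, the basic entourages you describe do form a base for the product uniformity, and the reduction of the two formulations of the lemma to the single equality $\Baire(K)=\mathcal{C}$ is indeed tautological.
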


We caution that this lemma does \emph{not} assert that $K$ itself lies in $\B_A$; see Remark \ref{red-counter} below for an explicit counterexample.  Also note that the index set $A$ is permitted to be uncountable.

\begin{proof}\footnote{We are indebted to the anonymous referee for this simplified proof.}  The collection of functions on $K$ of the form $f_\alpha \circ \pi_\alpha$ with $\alpha \in A$ and $f_\alpha \colon S_\alpha \to \R$ generate an algebra of continuous functions that separate points, hence by the Stone--Weierstrass theorem the $\sigma$-algebra they generate is equal to $\Baire(K)$.  The claim follows.
\end{proof}

Lemma \ref{subspace-red} combines well with the following theorem of Weil \cite{weil1937espaces}:

\begin{theorem}[Weil's theorem]\label{weil-thm} Every compact Hausdorff space is homeomorphic to a closed subset of a product of compact metric spaces.
\end{theorem}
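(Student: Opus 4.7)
The plan is to embed $K$ into a product of copies of the unit interval $[0,1]$, which is a compact metric space, so this will actually give a slightly stronger conclusion than stated: one may take all the metric factors to be the same.

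First I would invoke the fact that a compact Hausdorff space is normal, so Urysohn's lemma applies and yields a point-separating family of continuous functions $K \to [0,1]$. Concretely, given two distinct points $x \neq y$ in $K$, the singletons $\{x\}$ and $\{y\}$ are disjoint closed sets, so Urysohn produces a continuous $f \colon K \to [0,1]$ with $f(x)=0$ and $f(y)=1$. Let $\mathcal{F} \coloneqq C(K, [0,1])$ denote the collection of all continuous functions from $K$ into $[0,1]$.

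Next I would form the evaluation map
\[
\Phi \colon K \to [0,1]^{\mathcal{F}}, \qquad \Phi(x) \coloneqq (f(x))_{f \in \mathcal{F}},
\]
and verify its properties. Continuity of $\Phi$ follows from the universal property of the product topology, since the coordinate projection $\pi_f \circ \Phi = f$ is continuous for each $f \in \mathcal{F}$. Injectivity follows immediately from the separation property obtained in the previous step. Since $K$ is compact, the image $\Phi(K)$ is compact, and since the product $[0,1]^{\mathcal{F}}$ is Hausdorff, $\Phi(K)$ is closed in $[0,1]^{\mathcal{F}}$. Finally, a continuous bijection from a compact space onto a Hausdorff space is automatically a homeomorphism, so $\Phi \colon K \to \Phi(K)$ is a homeomorphism and $\Phi(K)$ is a closed subset of a product of (identical) compact metric spaces, as required.

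There is essentially no obstacle here beyond correctly invoking Urysohn's lemma; the only subtlety is that one needs normality of $K$, which uses compactness plus Hausdorffness in an essential way (in general Hausdorff spaces, continuous real-valued functions need not separate points). Once a separating family is in hand, the standard compact-to-Hausdorff trick packages everything into a closed embedding.
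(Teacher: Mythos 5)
Your proof is correct: the paper itself gives no proof of Theorem \ref{weil-thm} (it only cites Weil), and your argument is the standard embedding of $K$ into the Tychonoff cube $[0,1]^{C(K,[0,1])}$ via Urysohn's lemma, with the final step (continuous injection from a compact space into a Hausdorff product has closed image and is a homeomorphism onto it) being exactly the content of the paper's Lemma \ref{embed}. Nothing further is needed.
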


Lemma \ref{subspace-red} also combines well with the following topological lemma:

\begin{lemma}\label{embed}  Let $K$ be a compact Hausdorff space, and let $\rho = (\rho_\alpha)_{\alpha \in A}$ be a family of continuous maps $\rho_\alpha \colon K \to S_\alpha$ from $K$ to compact Hausdorff spaces $S_\alpha$.  Suppose that the $\rho_\alpha$ separate points, thus for every distinct $k,k' \in K$ there exists $\alpha \in A$ such that $\rho_\alpha(k) \neq \rho_\alpha(k')$.  We view $\rho \colon K \to S_A$ as a map from $K$ to $S_A$ by setting $\rho(k) \coloneqq (\rho_\alpha(k))_{\alpha \in A}$.  Then $\rho(K)$ is a closed subset of $S_A$, and $\rho$ is a homeomorphism between $K$ and $\rho(K)$ (where we give the latter the topology induced from the product topology on $S_A$).
\end{lemma}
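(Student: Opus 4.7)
The plan is to prove this by assembling several standard facts about compact Hausdorff spaces: namely, that a continuous injection from a compact space to a Hausdorff space is automatically a closed embedding onto its (necessarily compact, hence closed) image. All the hypotheses for this general principle are easily verified from the hypotheses of the lemma.

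First I would verify that $\rho \colon K \to S_A$ is continuous. Since $S_A$ carries the product topology, which is the initial topology with respect to the coordinate projections $\pi_\alpha \colon S_A \to S_\alpha$, continuity of $\rho$ reduces to continuity of each $\pi_\alpha \circ \rho = \rho_\alpha$, which is given. Next, I would note that $\rho$ is injective: if $\rho(k) = \rho(k')$, then $\rho_\alpha(k) = \rho_\alpha(k')$ for all $\alpha \in A$, and by the separation hypothesis this forces $k = k'$.

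Now I would observe that a product of Hausdorff spaces is Hausdorff, so $S_A$ is Hausdorff. Since $K$ is compact and $\rho$ is continuous, the image $\rho(K)$ is a compact subset of the Hausdorff space $S_A$, and therefore closed. More generally, for any closed (hence compact) subset $C$ of $K$, the image $\rho(C)$ is again compact in $S_A$ and therefore closed; this shows $\rho$ is a closed map from $K$ onto $\rho(K)$.

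At this point $\rho \colon K \to \rho(K)$ is a continuous closed bijection, so its set-theoretic inverse is continuous, making $\rho$ a homeomorphism onto $\rho(K)$ as required. I do not anticipate any real obstacle; the only mild subtlety is keeping track of which ambient space one is taking closures or topologies in, but since $\rho(K)$ is compact and $S_A$ is Hausdorff, there is no ambiguity between the subspace topology on $\rho(K)$ inherited from $S_A$ and the quotient/pushforward topology from $K$ via $\rho$.
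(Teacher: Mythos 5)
Your proof is correct and follows essentially the same route as the paper: continuity via the universal property of the product topology, injectivity from the separation hypothesis, compactness of the image giving closedness in the Hausdorff product, and the standard fact that a continuous bijection between compact Hausdorff spaces is a homeomorphism (you phrase this via closed maps, the paper via open maps, which for a bijection is the same thing). No gaps.
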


\begin{proof}  Clearly $\rho$ is continuous and injective (since the $\rho_\alpha$ separate points), so $\rho(K)$ is compact and hence closed in the Hausdorff space $S_A$.  Thus $\rho \colon K \to \rho(K)$ is a continuous bijection between compact Hausdorff spaces; it therefore maps compact sets to compact sets, hence is an open map, hence is a homeomorphism as required. 
\end{proof}

In the case when $K$ is a group, we can give a more explicit description of an embedding $\rho$ of the form described in Lemma \ref{embed}:

\begin{corollary}[Description of compact Hausdorff groups]\label{group-desc}  Let $K$ be a compact Hausdorff group.
\begin{itemize}
    \item[(i)]  There exists a family $\rho = (\rho_\alpha)_{\alpha \in A}$ of continuous unitary representations $\rho_\alpha \colon K \to S_\alpha$, $\alpha \in A$, of $K$ (thus each $S_\alpha$ is a unitary group and $\rho_\alpha$ is a continuous homomorphism) such that $\rho(K)$ is a closed subgroup of $S_A$, and $\rho \colon K \to \rho(K)$ is an isomorphism of topological groups.  The $\sigma$-algebra $\Baire(K)$ is generated by the representations $\rho_\alpha$.
    \item[(ii)]  If $K = (K,+)$ is abelian, and one defines the map $\iota \colon K \to \T^{\hat K}$ by $\iota(k) \coloneqq (\langle \hat k, k \rangle)_{\hat k \in \hat K}$, then $\iota(K)$ is a closed subgroup of $\T^{\hat K}$, and $\iota \colon K \to \iota(K)$ is an isomorphism of topological groups.  The $\sigma$-algebra $\Baire(K)$ is generated by the characters $\hat k \in \hat K$.  Furthermore, one can describe $\iota(K)$ explicitly as
    \begin{equation}\label{iotak} \iota(K) = \{ (\theta_{\hat k})_{\hat k \in \hat K} \in \T^{\hat K}: \theta_{\hat k_1 + \hat k_2} = \theta_{\hat k_1} + \theta_{\hat k_2} \forall \hat k_1, \hat k_2 \in \hat K \}.
    \end{equation}
\end{itemize}
\end{corollary}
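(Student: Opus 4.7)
The plan is to derive part (i) from the Peter--Weyl theorem combined with Lemma \ref{embed} and Lemma \ref{subspace-red}, and then deduce part (ii) as the special case in which $K$ is abelian, with an extra Pontryagin duality argument to pin down $\iota(K)$ explicitly.

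For part (i), I would invoke the Peter--Weyl theorem: for a compact Hausdorff group $K$, the finite-dimensional continuous irreducible unitary representations $\rho_\alpha \colon K \to U(n_\alpha) =: S_\alpha$ separate points of $K$. Lemma \ref{embed}, applied to this family, then yields that $\rho \colon K \to S_A$ is a homeomorphism onto a closed subset $\rho(K) \subseteq S_A$. Because each $\rho_\alpha$ is a group homomorphism, $\rho$ itself is a group homomorphism, so $\rho(K)$ is in fact a closed \emph{subgroup} of $S_A$ and $\rho$ is an isomorphism of topological groups. Since each $S_\alpha = U(n_\alpha)$ is a compact metric space, Lemma \ref{subspace-red} applies to the closed subspace $\rho(K) \subseteq S_A$, giving $\Baire(\rho(K)) = \{ E \cap \rho(K) : E \in \bigotimes_\alpha \B(S_\alpha)\}$, i.e., $\Baire(\rho(K))$ is generated by the restrictions of the coordinate projections. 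Transporting back along the homeomorphism $\rho$, we get that $\Baire(K)$ is the $\sigma$-algebra generated by the representations $\rho_\alpha$, as required.

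For part (ii), when $K$ is abelian the irreducible unitary representations are one-dimensional, i.e., they are precisely the characters $\hat k \in \hat K$; in particular $\hat K$ separates points of $K$. Specializing the argument of (i) to $S_\alpha = \T$ and $\rho_\alpha = \hat k$ shows that $\iota \colon K \to \T^{\hat K}$ is a homeomorphism and group isomorphism onto the closed subgroup $\iota(K)$, and that $\Baire(K)$ is generated by $\hat K$.

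It remains to verify the identity \eqref{iotak}. Let $H$ denote the right-hand side. The inclusion $\iota(K) \subseteq H$ is immediate from the fact that each $\hat k$ is a homomorphism. For the reverse inclusion, observe that a tuple $(\theta_{\hat k})_{\hat k \in \hat K} \in H$ is exactly an abstract group homomorphism $\theta \colon \hat K \to \T$. Here I use the standard fact that the dual $\hat K$ of a compact Hausdorff abelian group carries the discrete topology, so that \emph{every} group homomorphism $\hat K \to \T$ is automatically continuous, hence lies in the double dual $\widehat{\hat K}$. By Pontryagin duality, the canonical map $K \to \widehat{\hat K}$, $k \mapsto (\hat k \mapsto \langle \hat k, k\rangle)$, is a bijection, so every such $\theta$ is of the form $\hat k \mapsto \langle \hat k, k\rangle$ for a unique $k \in K$, giving $\theta = \iota(k) \in \iota(K)$. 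This yields $H \subseteq \iota(K)$ and completes the proof.

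I expect the main obstacle to be conceptual rather than technical: one must be careful in part (ii) to distinguish between algebraic and continuous homomorphisms $\hat K \to \T$, and the key clean fact which avoids any topological subtlety is precisely that $\hat K$ is discrete. Everything else is a bookkeeping exercise combining Peter--Weyl, Lemma \ref{embed}, and Lemma \ref{subspace-red}.
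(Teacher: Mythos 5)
Your proposal is correct and follows essentially the same route as the paper: Peter--Weyl plus Lemma \ref{embed} and Lemma \ref{subspace-red} for part (i), and for part (ii) the identification of the right-hand side of \eqref{iotak} with homomorphisms $\hat K \to \T$, using that $\hat K$ is discrete so that Pontryagin duality applies. Your explicit remark that discreteness of $\hat K$ is what makes every algebraic homomorphism continuous is a point the paper leaves implicit, but the argument is the same.
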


\begin{proof} For part (i), we observe from the Peter-Weyl theorem that there are enough continuous unitary representations of $K$ to separate points, and the claim now follows from Lemma \ref{embed} and Lemma \ref{subspace-red}.

For part (ii), we observe from Plancherel's theorem that the characters $\hat k \colon K \to \T$ for $\hat k \in \hat K$ separate points, so by Lemma \ref{embed} we verify that $\iota(K)$ is a closed subgroup of $\T^{\hat K}$ and that $\iota \colon K \to \iota(K)$ is an isomorphism of topological groups, and from Lemma \ref{subspace-red} we see that $\Baire(K)$ is generated by the characters $\hat k \in \hat K$.  As $K$ is compact, the Pontryagin dual $\hat K$ is discrete, and by Pontryagin duality, $K$ can be identified with the space of homomorphisms $\hat k \mapsto \theta_{\hat k}$ from $\hat K$ to $\T$.  This gives the description \eqref{iotak}.
\end{proof}

As a consequence of Corollary \ref{group-desc}, we have 

\begin{proposition}[Group operations measurable in Baire $\sigma$-algebra]\label{group-mes}  Let $K = (K,\cdot)$ be a compact Hausdorff group.  Then the group operations $\cdot: K_\Baire \times K_\Baire \to K_\Baire$ and $()^{-1}: K_\Baire \to K_\Baire$ are measurable.  In particular, if $K = (K,+)$ is a compact Hausdorff abelian group, then the group operations $+: K_\Baire \times K_\Baire \to K_\Baire$ and $-: K_\Baire \to K_\Baire$ are measurable.
\end{proposition}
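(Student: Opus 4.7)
The plan is to leverage Corollary \ref{group-desc}(i), which embeds $K$ as a closed subgroup of a product $S_A = \prod_{\alpha \in A} S_\alpha$ of compact metric unitary groups via continuous homomorphisms $\rho_\alpha \colon K \to S_\alpha$, and which crucially identifies $\Baire(K)$ as the $\sigma$-algebra generated by this family. Since this family generates $\Baire(K)$, to verify that a map $f \colon Y \to K_\Baire$ out of any measurable space $Y$ is measurable it suffices to check that each $\rho_\alpha \circ f$ is measurable into $S_\alpha$ (equipped with $\B(S_\alpha)$, which coincides with $\Baire(S_\alpha)$ since $S_\alpha$ is metrizable).

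Applying this criterion to the multiplication $m \colon K \times K \to K$, we need to show that $\rho_\alpha \circ m$ is measurable from $K \times K$, equipped with the product $\sigma$-algebra $\Baire(K) \otimes \Baire(K)$, to $S_\alpha$, for each $\alpha$. Since $\rho_\alpha$ is a homomorphism, we have the factorization
\[
\rho_\alpha \circ m = m_\alpha \circ (\rho_\alpha \times \rho_\alpha),
\]
where $m_\alpha \colon S_\alpha \times S_\alpha \to S_\alpha$ is multiplication on $S_\alpha$. The map $\rho_\alpha \times \rho_\alpha$ is measurable from $\Baire(K) \otimes \Baire(K)$ to $\B(S_\alpha) \otimes \B(S_\alpha)$, since each coordinate $\rho_\alpha$ is measurable by Corollary \ref{group-desc}. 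Because $S_\alpha$ is a compact metric space, the standard fact $\B(S_\alpha \times S_\alpha) = \B(S_\alpha) \otimes \B(S_\alpha)$ holds, and continuity of $m_\alpha$ (since $S_\alpha$ is a topological group) yields its Borel measurability. Composing gives measurability of $\rho_\alpha \circ m$ as required.

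Inversion is handled in exactly the same way: since $\rho_\alpha((\cdot)^{-1}) = (\rho_\alpha(\cdot))^{-1}$, we factor $\rho_\alpha \circ ()^{-1} = ()^{-1} \circ \rho_\alpha$ through the continuous (hence Borel measurable) inversion on $S_\alpha$, and the generating criterion concludes. The abelian case is just a notational restatement. I do not foresee any real obstacle here; the entire argument is a routine application of Corollary \ref{group-desc} combined with the well-known equality of Borel and product Borel $\sigma$-algebras on products of two compact metric spaces, so the work has effectively already been done in proving the Peter-Weyl embedding lemma.
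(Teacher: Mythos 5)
Your proof is correct and follows the same route as the paper: both invoke Corollary \ref{group-desc}(i) to realize $K_\Baire$ inside a product of compact metric unitary groups and then reduce measurability of the group operations to the coordinate representations. The paper states this in two sentences; your write-up merely supplies the details (the factorization $\rho_\alpha \circ m = m_\alpha \circ (\rho_\alpha \times \rho_\alpha)$ and the identity $\B(S_\alpha \times S_\alpha) = \B(S_\alpha) \otimes \B(S_\alpha)$ for metric spaces) that the paper leaves implicit.
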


\begin{proof}  By Corollary \ref{group-desc}(i), we may view $K_\Baire$ as a closed subgroup of a product of unitary groups.  The group operations are measurable on each such unitary group, hence measurable on the product, giving the claim.
\end{proof}

\begin{remark}[Nedoma pathology]\label{red-counter}  Let $K$ be the non-metrizable compact Hausdorff abelian group $K = \T^\R$, and let $K^\Delta \subset K \times K$ be the diagonal closed subgroup $K^\Delta = \{ (k,k): k \in K \}$.  By \emph{Nedoma's pathology}  \cite{nedoma1957note}, $K^\Delta$ is not measurable in $\B(K) \otimes \B(K)$.  Indeed, $\B(K) \otimes \B(K)$ consists of the union of $\B_1 \otimes \B_2$ as $\B_1, \B_2$ range over countably generated subalgebras of $\B(K)$.  If $K^\Delta$ were in $\B(K) \otimes \B(K)$, we conclude on taking slices that all the points in $K$ lie in a single countably generated subalgebra of $\B(K)$, but the latter has cardinality at most $2^{\aleph_0}$ and the former has cardinality $2^{2^{\aleph_0}}$, leading to a contradiction.  This shows that $\B(K) \otimes \B(K) \neq \B(K \times K)$, and also shows that in Lemma \ref{subspace-red} $K$ need not be measurable in $S_A$.  Also, by comparing this situation with Proposition \ref{group-mes}, we conclude that $\B(K) \neq \Baire(K)$ in this case.  This can also be seen directly: $\Baire(K)$ is the product $\sigma$-algebra on $\T^\R$, which is also equal to the union of the pullbacks of the $\sigma$-algebras of $\T^I$ for all countable subsets of $I$.  In particular a single point in $K$ will not be measurable in $\Baire(K)$, even though it is clearly measurable in $\B(K)$.
\end{remark}

\section{A conditional Pontryagin duality theorem}\label{pont-sec}

Throughout this section, $X = (X,\Sigma_X,\mu)$ denotes a measure space; to avoid some degeneracies we will assume in this section that $X$ has positive measure.  We will use the abstract measurable space $X_\mu$ as a base space for the formalism of conditional set theory and conditional analysis, as laid out in \cite{drapeau2016algebra} (although as it turns out we will not need to draw upon the full power\footnote{For instance, we will not utilize the (measurable) topos-theoretic ability, which is powered by the completeness of $X_\mu$ when viewed as a Boolean algebra (which is equivalent to $X_\mu$ being decomposable, and in particular is the case if $(X,\Sigma_X,\mu)$ is $\sigma$-finite, but is an assumption we will not need in our analysis), to glue together different conditional objects along a partition of the base space $X_\mu$, which allows one to develop in particular a theory of conditional metric spaces and conditional topology.} of this theory in this paper). In this formalism, many familiar objects such as numbers, sets, and functions will have ``conditional'' analogues which vary ``measurably'' with the base space $X_\mu$; to avoid confusion, we will then use the term ``classical'' to refer to the original versions of these concepts.  Thus for instance we will have classical real numbers and conditional real numbers, classical functions and conditional functions, and so forth.  The adjectives ``classical'' and ``conditional'' in this formalism are analogous to the adjectives ``deterministic'' and ``random'' in probability theory (for instance the latter theory deals with both deterministic real numbers and random real variables).  Our ultimate objective of this section is to obtain a conditional analogue of the Pontryagin duality identity \eqref{iotak}.

We begin with some basic definitions.

\begin{definition}[Conditional spaces]  If $Y = (Y,\Y)$ is any concrete measurable space, we define the \emph{conditional analogue}
$\Cond(Y) = \Cond_{X_\mu}(Y)$ of $Y$ to be the space $\Cond(Y) \coloneqq \Hom_{\AbsMes}(X_\mu; Y)$.  Elements of $\Cond(Y)$ will be referred to as \emph{conditional elements} of $Y$.  Thus for instance elements of $\Cond(\R) = \Hom_{\AbsMes}(X_\mu; \R)$ are conditional reals, and elements of $\Cond(\N) = \Hom_{\AbsMes}(X_\mu; \N)$ are conditional natural numbers.  Every (classical) element $y \in Y$ gives rise to a constant abstract measurable map $\Cond(y) \in \Cond(Y)$, defined by setting $\Cond(y)^* A = 1_{y \in A}$ for $A \in \Y$ (where the indicator $1_{y \in A}$ is interpreted as taking values in the $\sigma$-complete Boolean algebra $\X_\mu$). We will usually abuse notation by referring\footnote{This is analogous to how a constant function $x \mapsto c$ that takes a fixed value $c \in Y$ for all inputs $x \in X$ is often referred to (by abuse of notation) as $c$. Strictly speaking, in order for the identification of $y$ with $\Cond(y)$ to be injective, $\Y$ needs to separate points (i.e., for any distinct $y,y'$ in $Y$ there exists $A \in \Y$ that contains $y$ but not $y'$), but we will ignore this subtlety when abusing notation in this manner.} to $\Cond(y)$ simply as $y$.

\end{definition}

Thus for instance if $\rho = (\rho_\gamma)_{\gamma \in \Gamma}$ is an abstract $K$-valued cocycle, then each $\rho_\gamma$ is a conditional element of $K_\Baire$.

As discussed in the introduction, every concrete measurable map $f: X \to Y$ into a concrete measurable space $Y$ gives rise to a conditional element $[f] \in \Cond(Y)$.  In the case that $X$ is a Polish space, this is an equivalence:

\begin{proposition}[Conditional elements of compact metric or Polish spaces]\label{metr}  Let $K$ be a Polish space.  Then every conditional element $k \in \Cond(K)$ has a realization by a concrete measurable map $F \colon X \to K$, unique up to $\mu$-almost everywhere equivalence.
\end{proposition}


\begin{proof}  Since $X$ has positive measure, $X_\mu$ is non-trivial, and hence we may assume $K$ is non-empty (since otherwise there are no conditional elements of $K$).

First suppose that $K$ is Polish.  We may endow $K$ with a complete metric $d$. The space $K$ is separable, and hence for every $n \in \N$ there exists a measurable ``rounding map'' $f_n: K \to S_n$ to an at most countable  subset $S_n$ of $K$ with the property that 
\begin{equation}\label{fnk}
d(k', f_n(k')) \leq \frac{1}{n}
\end{equation}
for all $k' \in K$.  If $k \in \Cond(K) = \Hom_\AbsMes(X_\mu;K)$, then $f_n \circ k \in \Cond(S_n) = \Hom_\AbsMes(X_\mu; S_n)$ (since $f_n$ can be viewed as an element of $\Hom_\AbsMes(K; S_n)$).  By taking representatives of the preimages $(f_n \circ k)^* \{s\}  = k^*(f_n^*(\{s\}))$ for each $s \in S_n$, and adjusting these representatives by null sets to form a partition of $X$, we can find a measurable realization $F_n \colon X \to S_n$ of $f_n \circ k$.  Since $d(f_n(k'),f_m(k')) \leq \frac{1}{n}+\frac{1}{m}$ for all $n,m \in \N$ and $k' \in K$, we have $d(F_n(x), F_m(x)) \leq \frac{1}{n} + \frac{1}{m}$ for each $n,m \in \N$ and $\mu$-almost every $x \in X$.  Thus the sequence of measurable functions $F_n\colon X \to K$ is almost everywhere Cauchy, and thus (see e.g., \cite[Lemmas 1.10, 4.6]{kallenberg2002}) converges $\mu$-almost everywhere  to a measurable limit $F\colon X \to K$.  To finish the claim of existence, it suffices to show that $[F] = k$, that is to say that
$$ [F^*(E)] = k^*(E)$$
for all Borel subsets $E$ of $K$.  Since this claim is preserved under $\sigma$-algebra operations, we may assume without loss of generality that $E$ is an open ball $E = B(k_0,r)$.  Let $0 < r_1 < r_2 < \dots < r$ be a strictly increasing sequence of radii converging to $r$.  If $m > 2$, then since the $F_n$ converge almost everywhere to $F$, we have
$$ \limsup_{n \to \infty} [F_n^*(B(k_0, r_{m-1}))]
\leq [F^*(B(k_0, r_m))] \leq \liminf_{n \to \infty} [F_n^*(B(k_0, r_{m+1}))]
$$
in the $\sigma$-complete Boolean algebra $X_\mu$.  But when $n$ is sufficiently large depending on $m$, we have from \eqref{fnk} that
$$ [F_n^*(B(k_0, r_{m-1}))] = k^*( f_n^*(B(k_0, r_{m-1})) ) \geq k^*(B(k_0,r_{m-2}))$$
and
$$ [F_n^*(B(k_0, r_{m+1}))] = k^*( f_n^*(B(k_0, r_{m+1})) ) \leq k^*(B(k_0,r_{m+2}))$$
and thus we have
$$ k^*(B(k_0, r_{m-2}))
\leq [F^*(B(k_0, r_m))] \leq k^*(B(k_0, r_{m+2}))
$$
for all $m>2$.  Sending $m \to \infty$, using the $\sigma$-complete homomorphism nature of both $k^*$ and $F^*$, we conclude that 
$$ [F^*(B(k_0,r))] = k^*(B(k_0,r))$$
as required.

For uniqueness, suppose that $F,G\colon X \to K$ are two measurable maps with $[F]=[G]$, thus $F^* E$ differs by a null set from $G^* E$ for every measurable $E \in K$.  If $F$ is not equal almost everywhere to $G$, then $d(F,G) > 0$ on a set of positive measure, and then by the second countable nature of $K$ we may find a ball $B$ for which $F^* B$ and $G^* B$ differ by a set of positive measure, a contradiction.  Thus $F$ is equal to $G$ $\mu$-almost everywhere as claimed.
\end{proof}

Now we look at conditional elements of arbitrary products $\prod_{\alpha \in A} S_\alpha = (\prod_{\alpha \in A} S_\alpha, \bigotimes_{\alpha \in A} \Scal_\alpha)$ of Polish spaces $S_\alpha = (S_\alpha, \Scal_\alpha)$.  Here, as is usual, $\prod_{\alpha \in A} S_\alpha$ is the Cartesian product, and the product $\sigma$-algebra $\bigotimes_{\alpha \in A} \Scal_\alpha$ is the minimal $\sigma$-algebra that makes all the projection maps $\pi_\beta \colon \prod_{\alpha \in A} S_\alpha \to S_\beta$ measurable for $\beta \in A$.   We have the following fundamentally important identity:

\begin{proposition}[Conditional elements of product spaces]\label{product}  Let $(S_\alpha)_{\alpha \in A}$ be a family of Polish spaces $S_\alpha = (S_\alpha,\Scal_\alpha)$.  Then one has the equality
$$ \Cond\left(\prod_{\alpha \in A} S_\alpha\right) = \prod_{\alpha \in A} \Cond(S_\alpha)$$
formed by identifying each conditional element $f$ of $\prod_{\alpha \in A} S_\alpha$ with the tuple $(\pi_\alpha \circ f)_{\alpha \in A}$.
\end{proposition}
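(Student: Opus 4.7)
The plan is to show that $\Phi \colon f \mapsto (\pi_\alpha \circ f)_{\alpha \in A}$ is a bijection from $\Cond(\prod_{\alpha \in A} S_\alpha)$ onto $\prod_{\alpha \in A} \Cond(S_\alpha)$. Injectivity is the easy direction: a $\sigma$-complete Boolean homomorphism out of $\bigotimes_{\alpha \in A} \Scal_\alpha$ is determined by its values on any $\sigma$-generating family, because the set of elements on which two such homomorphisms agree is itself a sub-$\sigma$-algebra. The cylinders $\pi_\alpha^* E$ ($\alpha \in A$, $E \in \Scal_\alpha$) $\sigma$-generate $\bigotimes_{\alpha \in A} \Scal_\alpha$, and on these $f^* \pi_\alpha^* E = (\pi_\alpha \circ f)^* E$, so $f^*$ is determined by the tuple $(\pi_\alpha \circ f)_{\alpha \in A}$.

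For surjectivity I would first treat the case of a countable index set. Given a countable $B \subset A$ and a family $(f_\beta)_{\beta \in B}$ with $f_\beta \in \Cond(S_\beta)$, Proposition \ref{metr} provides concrete measurable realizations $F_\beta \colon X \to S_\beta$ of each $f_\beta$, unique up to $\mu$-a.e.\ equivalence. The product map $F_B \coloneqq (F_\beta)_{\beta \in B} \colon X \to \prod_{\beta \in B} S_\beta$ is then concretely measurable (the product $\sigma$-algebra on a countable product of Polish spaces coincides with the Borel $\sigma$-algebra, so coordinatewise measurability is enough), and its abstraction $f_B \coloneqq [F_B]$ defines an element of $\Cond(\prod_{\beta \in B} S_\beta)$ with $\pi_\beta \circ f_B = f_\beta$ for every $\beta \in B$. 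Independence from the choice of representatives uses only that a countable union of null sets is null.

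For arbitrary $A$ I invoke the standard structural fact that every $E \in \bigotimes_{\alpha \in A} \Scal_\alpha$ depends on only countably many coordinates, i.e.\ $E = \pi_B^* E_B$ for some countable $B \subset A$ and $E_B \in \bigotimes_{\beta \in B} \Scal_\beta$, where $\pi_B \colon \prod_{\alpha \in A} S_\alpha \to \prod_{\beta \in B} S_\beta$ is the coordinate projection. I then set $f^* E \coloneqq f_B^* E_B$. Well-definedness (independence of $B$) reduces, by passing to the union $B \cup B'$, to the identity $f_C^* \circ \pi_{C,B}^* = f_B^*$ for countable $B \subset C$; both sides are $\sigma$-complete homomorphisms agreeing on cylinders, hence coincide by the injectivity argument. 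Preservation of countable joins by $f^*$ follows because any countable family $(E_n)$ can be simultaneously realized inside a single countable coordinate set $B = \bigcup_n B_n$, after which the claim is just $\sigma$-completeness of $f_B^*$. The main obstacle is the countable case: once Proposition \ref{metr} supplies concrete realizations, the rest is a consistency/gluing exercise, and the Polishness hypothesis enters precisely at this realization step.
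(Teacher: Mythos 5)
Your argument is correct, and it runs on the same engine as the paper's: injectivity because a $\sigma$-complete Boolean homomorphism is determined by its values on a $\sigma$-generating family, and surjectivity by concretely realizing each coordinate via Proposition \ref{metr}. The difference is that the paper then finishes in one line: the map $\tilde f(x) \coloneqq (\tilde f_\alpha(x))_{\alpha \in A}$ into the \emph{full} product is already a concrete measurable map, because measurability into a product $\sigma$-algebra is, by definition of that $\sigma$-algebra, equivalent to measurability of every coordinate --- no countability of $A$ is needed for this, and no coincidence of the product $\sigma$-algebra with a Borel $\sigma$-algebra is invoked. Your appeal to ``product $=$ Borel for countable products of Polish spaces'' is therefore a red herring even in your countable case, and the entire second stage of your argument --- decomposing each $E \in \bigotimes_{\alpha \in A}\Scal_\alpha$ as $\pi_B^* E_B$ for countable $B$, defining $f^* E \coloneqq f_B^* E_B$, and checking well-definedness and $\sigma$-completeness by passing to unions of countable coordinate sets --- is a correct but unnecessary detour; one simply takes $f \coloneqq [\tilde f]$ and verifies $(\pi_\alpha \circ f)^* = f_\alpha^*$ on generators. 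What your longer route buys is a purely abstract gluing mechanism that would still function if one only knew how to realize conditional elements of countable subproducts rather than of the individual factors; what the paper's route buys is brevity and the observation (made explicitly right after Example \ref{bad-ex}) that the good behaviour under arbitrary products is exactly where $\Cond(\cdot)$ outperforms $L^0$. If you keep your version, do note that the same ``pass to $B \cup B'$'' argument is also needed to check that your glued $f^*$ respects complements and finite meets, not only countable joins.
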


\begin{proof}  It is clear that if $f \in  \Cond(\prod_{\alpha \in A} S_\alpha)$ then $(\pi_\alpha \circ f)_{\alpha \in A}$ lies in $\prod_{\alpha \in A} \Cond(S_\alpha)$.  Now  suppose that $(f_\alpha)_{\alpha \in A}$ is an element of $\prod_{\alpha \in A} \Cond(S_\alpha)$.  By Proposition \ref{metr}, for each $\alpha \in A$ we can find a concrete measurable map $\tilde f_\alpha: X \to S_\alpha$ such that $f_\alpha = [\tilde f_\alpha]$.  Let $\tilde f: X \to \prod_{\alpha \in A} S_\alpha$ be the map
$$ \tilde f(x) \coloneqq (\tilde f_\alpha(x))_{\alpha \in A},$$
then $\tilde f$ is a concrete measurable map.  Set $f \coloneqq [\tilde f]$, then $f \in \Cond\left(\prod_{\alpha \in A} S_\alpha\right)$.  By chasing all the definitions we see that $(\pi_\alpha \circ f)^* E = f_\alpha^* E$ for any $E \in \Scal_\alpha$, hence $(f_\alpha)_{\alpha \in A} = (\pi_\alpha \circ f)_{\alpha \in A}$.

It remains to show that each tuple $(f_\alpha)_{\alpha \in A}$ is associated to at most one $f \in \Cond(\prod_{\alpha \in A} S_\alpha)$.  Suppose that $f,g \in \Cond(\prod_{\alpha \in A} S_\alpha)$ are such that $\pi_\alpha \circ f = \pi_\alpha \circ g$ for all $\alpha \in A$.  Then we have $f^* E = g^* E$ for all generating elements $E$ of the product $\sigma$-algebra $\bigotimes_{\alpha \in A} \Scal_\alpha$.  As $f^*, g^*$ are both $\sigma$-algebra homomorphisms, we conclude that $f^* = g^*$ and hence $f=g$, giving the claim.
\end{proof}

The hypothesis that $S_\alpha$ are Polish cannot be relaxed to arbitrary concrete measurable spaces, even when considering products of just two spaces; see Proposition \ref{counter-prop}.

If $f \colon Y \to Z$ is a (classical) concrete measurable map between two concrete measurable spaces $Y,Z$, then we can define the conditional analogue $\Cond(f) \colon \Cond(Y) \to \Cond(Z)$ of this function by the formula
$$ \Cond(f)(y) \coloneqq f \circ y$$
for $y \in \Cond(Y)$. 
By chasing the definitions, we also observe the functoriality property
\begin{equation}\label{functor}
    \Cond(g \circ f) = \Cond(g) \circ \Cond(f)
\end{equation}
whenever $f \colon Y \to Z$, $g \colon Z \to W$ are classical concrete measurable maps between concrete measurable spaces $Y,Z,W$; using the identification from Proposition \ref{product} we also have the identity
\begin{equation}\label{condpair}
    (\Cond(f_1), \Cond(f_2)) = \Cond((f_1,f_2))
\end{equation}
for any classical concrete measurable maps $f_1 \colon K \to S_1$, $f_2 \colon K \to S_2$ from a measurable space $K$ to Polish spaces $S_1, S_2$, and more generally
\begin{equation}\label{cond-tuple}
    (\Cond(f_\alpha))_{\alpha \in A} = \Cond((f_\alpha)_{\alpha \in A})
\end{equation}
whenever $f_\alpha \colon K \to S_\alpha$, $\alpha \in A$, are classical concrete measurable maps from a measurable space $K$ to Polish spaces $S_\alpha$.

Suppose that $S$ is a concrete measurable space and $K$ is a (possibly non-measurable) subset of $S$, then the measurable space structure on $S$ induces one on $K$ by restricting all the measurable sets of $S$ to $K$.  The inclusion map $\iota: K \to S$ is then measurable, and thus $\Cond(\iota)$ is a conditional map from $\Cond(K)$ to $\Cond(S)$, which is easily seen to be injective; thus (by abuse of notation) we can view $\Cond(K)$ as a subset of $\Cond(S)$.  One can then ask for a description of this subset.  We can answer this in two cases:

\begin{proposition}[Description of $\Cond(K)$]\label{condk-desc}  Let $S = (S, \mathcal{S})$ be a concrete measurable space, let $K$ be a subset of $S$ with the induced measurable space structure $(K,\mathcal{K})$, and view $\Cond(K)$ as a subset of $\Cond(S)$ as indicated above.
\begin{itemize}
    \item[(i)] If $K$ is measurable in $S$, then $\Cond(K)$ consists of those conditional elements $s \in \Cond(S)$ of $S$ such that $s^* K=1$.
    \item[(ii)] If $S = S_A = \prod_{\alpha \in A} S_\alpha$ is the product of compact metric spaces $S_\alpha$ with the product $\sigma$-algebra, and $K$ is a closed (but not necessarily measurable) subset of $S_A$, then $\Cond(K)$ consists of those conditional elements $s_A \in \Cond(S_A)$ of $S_A$ such that $s_A^* \pi_I^{-1}(\pi_I(K))  = 1$ for all at most countable $I \subset A$, where $\pi_I: S_A \to S_I$ is the projection to the product $S_I \coloneqq \prod_{i \in I} S_i$.
\end{itemize}
\end{proposition}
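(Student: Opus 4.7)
The plan is to handle the two parts in parallel, since part (ii) is essentially a quantitative refinement of part (i) that accounts for the failure of $K$ to be measurable.

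For part (i), the forward direction is immediate: if $k \in \Cond(K)$ and $s = \Cond(\iota)(k) = \iota \circ k \in \Cond(S)$, then $s^* K = k^*(\iota^* K) = k^*(K \cap K) = k^* K = 1$. For the reverse, given $s \in \Cond(S)$ with $s^* K = 1$, I will define a candidate pullback $k^* \colon \mathcal{K} \to \X_\mu$ by $k^*(E \cap K) \coloneqq s^* E$ for $E \in \mathcal{S}$. The only subtle point is well-definedness: if $E_1 \cap K = E_2 \cap K$ then $E_1 \Delta E_2 \subset S \setminus K$, so $s^*(E_1 \Delta E_2) \leq s^*(S\setminus K) = 1 - s^* K = 0$, hence $s^* E_1 = s^* E_2$. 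The $\sigma$-completeness and Boolean-homomorphism properties of $k^*$ are inherited from $s^*$, and by construction $\iota \circ k = s$.

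For part (ii), the forward direction is the same calculation: since $K \subset \pi_I^{-1}(\pi_I(K))$ in $S_A$ for every $I \subset A$, if $s_A = \iota \circ k$ for some $k \in \Cond(K)$ then
\[
 s_A^* \pi_I^{-1}(\pi_I(K)) = k^*(K \cap \pi_I^{-1}(\pi_I(K))) = k^* K = 1.
\]
For the reverse direction, I will again attempt to define $k^* \colon \mathcal{K} \to \X_\mu$ by $k^*(E \cap K) \coloneqq s_A^* E$ for $E$ in the product $\sigma$-algebra $\B_A$, noting that by Lemma \ref{subspace-red} every element of $\mathcal{K}$ has this form. The main obstacle is again well-definedness, but now with the additional difficulty that $K$ itself need not be measurable. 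The key observation is the standard fact that any set $E \in \B_A = \bigotimes_{\alpha \in A} \B(S_\alpha)$ is cylindrical: there exists an at most countable $I \subset A$ and a measurable $F \subset S_I$ with $E = \pi_I^{-1}(F)$ (this is a routine monotone class argument on the algebra of cylindrical sets). Applying this to $E_1 \Delta E_2$, if $E_1 \cap K = E_2 \cap K$ then $\pi_I^{-1}(F) \cap K = \emptyset$, i.e., $F \cap \pi_I(K) = \emptyset$, and hence $E_1 \Delta E_2 \subset S_A \setminus \pi_I^{-1}(\pi_I(K))$. The hypothesis $s_A^*\pi_I^{-1}(\pi_I(K)) = 1$ then forces $s_A^*(E_1 \Delta E_2) = 0$, giving $s_A^* E_1 = s_A^* E_2$.

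The remainder of the argument is formal: $k^*$ inherits the Boolean algebra and $\sigma$-completeness properties from $s_A^*$, and by construction $\iota \circ k = s_A$. The main conceptual point to emphasize is that while $K$ being non-measurable means we cannot impose the single condition $s_A^* K = 1$ as in part (i), we \emph{can} impose its ``cylindrical shadows'' $s_A^* \pi_I^{-1}(\pi_I(K)) = 1$ for each countable $I$; since every element of $\B_A$ depends on only countably many coordinates, this family of conditions is collectively strong enough to recover the conclusion of part (i).
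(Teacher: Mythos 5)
Your proof is correct and follows essentially the same route as the paper's: both directions reduce to showing that $s^*$ (resp.\ $s_A^*$) annihilates every measurable set disjoint from $K$, so that the pullback descends to the induced $\sigma$-algebra $\mathcal{K}$, with part (ii) resting on the fact that every set in $\B_A$ depends on only countably many coordinates. The only detail worth making explicit is the paper's observation that $\pi_I(K)$ is compact, hence Borel in the metrizable space $S_I$, so that $\pi_I^{-1}(\pi_I(K))$ really does lie in $\B_A$ and is a legitimate argument for $s_A^*$.
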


\begin{proof}
For part (i), it is clear that if $k \in \Cond(K)$ then $k^* K=1$.  Conversely, if $s^* K=1$, then $s^* K^c=0$, and hence $s^* E = s^* F$ whenever $E,F$ are measurable subsets of $S$ that agree on $K$ (since $s^*(E \cap K^c) = s^*(F \cap K^c) = 0$).  Thus the $\sigma$-complete Boolean homomorphism $s^*: \mathcal{S} \to \X_\mu$ descends to a $\sigma$-complete Boolean homomorphism on $\mathcal{K}$, so that $s \in \Cond(K)$ as claimed.

Now we prove part (ii).  If $k \in \Cond(K)$ and $I \subset A$ is at most countable, then the image $\pi_I(K)$ is a compact subset of the metrizable space $S_I$, and is hence measurable in $S_I$; this also implies that $\pi_I^{-1}(\pi_I(K))$ is measurable in $S_A$.  Observe that $\Cond(\pi_I)(k)$ is an element of $\Cond(\pi_I(K))$, hence by (i) we have $\Cond(\pi_I)(k)^* \pi_I(K) = 1$, and hence $k^*( \pi_I^{-1}(\pi_I(K))) = 1$.

Conversely, assume that $s_A \in \Cond(S_A)$ is such that $s_A^*  \pi_I^{-1}(\pi_I(K))  = 1$ for all at most countable $I \subset A$. Let $E$ be a measurable subset of $S_A$ that was disjoint from $K$.  The product $\sigma$-algebra $\bigotimes_{\alpha \in A} \B(S_\alpha)$ is equal to the union of the pullbacks $\pi_I^*(\bigotimes_{i \in I} \B(S_i))$ as $I$ ranges over countable subsets of $A$ (since the latter is a $\sigma$-algebra contained in the former that contains all the generating sets).  Thus there exists an at most countable $I$ such that $E = \pi_I^{-1}(E_I)$ for some measurable subset $E_I$ of $S_I$.  Since $E$ is disjoint from $K$, $E_I$ is disjoint from $\pi_I(K)$, hence $E$ is disjoint from $\pi_I^{-1}(\pi_I(K))$.  Since $s_A^* \pi_I^{-1}(\pi_I(K))=1$, we conclude that $s_A^* E=0$ for all measurable $E$ disjoint from $K$.  Thus $s_A^* E = s_A^* F$
 whenever $E,F$ are measurable subsets of $S_A$ that agree on $K$, and by arguing as in (i) we conclude that $s \in \Cond(K)$, giving (ii).
\end{proof}

As a corollary we have the following variant of Proposition \ref{product}:

\begin{corollary}[Conditional elements of product spaces, II]\label{square}  Let $K, K'$ be compact Hausdorff spaces.  Then $\Cond(K_\Baire \times K'_\Baire) = \Cond(K_\Baire) \times \Cond(K'_\Baire)$.
\end{corollary}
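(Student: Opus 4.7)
The strategy is to reduce the claim to Proposition \ref{product} together with Proposition \ref{condk-desc}(ii), by embedding both $K$ and $K'$ into large products of compact metric spaces.

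First, I would apply Weil's theorem (Theorem \ref{weil-thm}) to realize $K$ as a closed subspace of a product $S_A = \prod_{\alpha \in A} S_\alpha$ of compact metric spaces, and similarly $K'$ as a closed subspace of $S'_{A'} = \prod_{\alpha' \in A'} S'_{\alpha'}$. Then $K \times K'$ is a closed subspace of $S_A \times S'_{A'}$, which is itself a product of compact metric spaces indexed by the disjoint union $A \sqcup A'$. A preliminary check using Lemma \ref{subspace-red} shows that both $\Baire(K) \otimes \Baire(K')$ and $\Baire(K \times K')$ are generated by the coordinate projections from $K \times K'$ onto each $S_\alpha$ and $S'_{\alpha'}$, so the two $\sigma$-algebras coincide and there is no ambiguity in the notation $K_\Baire \times K'_\Baire$.

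Next, I would apply Proposition \ref{product} to the Polish spaces $S_\alpha$ and $S'_{\alpha'}$ to obtain the canonical identification
\[ \Cond(S_A \times S'_{A'}) = \Cond(S_A) \times \Cond(S'_{A'}), \]
under which an element $s$ on the left corresponds to a pair $(s_A, s'_{A'})$ satisfying $s^*(E_A \times S'_{A'}) = s_A^* E_A$ for measurable $E_A \subset S_A$, and symmetrically for the other factor. I would then apply Proposition \ref{condk-desc}(ii) three times: to $K \subset S_A$, to $K' \subset S'_{A'}$, and to $K \times K' \subset S_A \times S'_{A'}$. The key combinatorial observation is that every at most countable $J \subset A \sqcup A'$ splits uniquely as $J = I \sqcup I'$ with $I \subset A$ and $I' \subset A'$ at most countable, and
\[ \pi_J(K \times K') = \pi_I(K) \times \pi_{I'}(K'), \]
so the preimage under $\pi_J$ factors as the intersection $\pi_I^{-1}(\pi_I(K)) \cap \pi_{I'}^{-1}(\pi_{I'}(K'))$ inside $S_A \times S'_{A'}$.

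Since $s^*$ is a Boolean homomorphism, the condition $s^* \pi_J^{-1}(\pi_J(K\times K')) = 1$ then factors as the conjunction of $s_A^* \pi_I^{-1}(\pi_I(K)) = 1$ and $(s'_{A'})^* \pi_{I'}^{-1}(\pi_{I'}(K')) = 1$. Letting $J$ (equivalently, $I$ and $I'$) range over all at most countable subsets and applying Proposition \ref{condk-desc}(ii) in each direction, this is precisely the statement that $s \in \Cond(K \times K')$ if and only if $s_A \in \Cond(K)$ and $s'_{A'} \in \Cond(K')$, proving the corollary. The only nontrivial step is the bookkeeping involved in checking that the three applications of Proposition \ref{condk-desc}(ii) line up correctly, but once the factorization of $\pi_J(K \times K')$ as a rectangle is in hand, everything reduces to the compatibility of pullbacks under the product identification in Proposition \ref{product}.
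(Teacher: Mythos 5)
Your proposal is correct and follows essentially the same route as the paper: embed $K$ and $K'$ into products of compact metric spaces via Weil's theorem and Lemma \ref{subspace-red}, identify $\Baire(K)\otimes\Baire(K') = \Baire(K\times K')$, use Proposition \ref{product} on the ambient products, and then apply Proposition \ref{condk-desc}(ii) to $K$, $K'$, and $K\times K'$, noting that the countable-projection conditions factor as a conjunction over rectangles. The explicit splitting $J = I \sqcup I'$ and the identity $\pi_J(K\times K') = \pi_I(K)\times\pi_{I'}(K')$ that you spell out is exactly the bookkeeping the paper's proof carries out implicitly.
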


The proof given below extends (with only minor notational changes) to arbitrary products of compact Hausdorff spaces, not just to products of two spaces, but the latter case is the only one we need in this paper.  We also give a generalization of Corollary \ref{square} in Proposition \ref{half-square}, in the case that $X$ is a probability space.

\begin{proof}  By Theorem \ref{weil-thm} and Lemma \ref{subspace-red}, we may assume $K_\Baire$ is a subspace of a product $S_A = \prod_{\alpha \in A} S_\alpha$ of compact metric spaces $S_\alpha$, with the $\sigma$-algebra induced from the product $\sigma$-algebra, and similarly that $K'_\Baire$ is a subspace of $S'_{A'} = \prod_{\alpha \in A'} S'_\alpha$.  From Proposition \ref{condk-desc}(ii), $\Cond(K_\Baire)$ consists of those elements $s_A \in \Cond(S_A)$ such that $s_A^* \pi_I^{-1}(\pi_I(K))  = 1$ for all at most countable $I \subset A$.  Similarly for $\Cond(K'_\Baire)$.  From Lemma \ref{subspace-red} we have $K_\Baire \times K'_\Baire = (K \times K')_\Baire$, and from Proposition \ref{product} we have $\Cond(S_A \times S'_{A'}) = \Cond(S_A) \times \Cond(S'_{A'})$, so by a second application of Proposition \ref{condk-desc} we see that $\Cond(K_\Baire \times K'_\Baire)$ consists of those elements $(s_A, s'_{A'}) \in \Cond(S_A) \times \Cond(S'_{A'})$ such that
$$ (s_A, s'_{A'})^* ( \pi_I^{-1}(\pi_I(K)) \times \pi_{I'}^{-1}(\pi_{I'}(K')) ) = s_A^* \pi_I^{-1}(\pi_I(K)) \wedge (s'_{A'})^* \pi_{I'}^{-1}(\pi_{I'}(K')) = 1$$
for all at most countable $I \subset A, I' \subset A'$.  The claim follows.
\end{proof}

We can use conditional analogues of classical functions to generate various operations on conditional elements of concrete measurable spaces.  For instance, suppose we have two conditional real numbers $x,y \in \Cond(\R)$.  Then we can define their sum $x+y \in \Cond(\R)$ by the formula
\begin{equation}\label{plus-def}
 x+y = \Cond(+)(x,y)
 \end{equation}
where we use Proposition \ref{product} to view $(x,y)$ as an element of $\Cond(\R^2)$, and $+: \Cond(\R^2) \to \Cond(\R)$ is the conditional analogue of the classical addition map $+: \R^2 \to \R$.  Similarly for the other arithmetic operations; one then easily verifies using \eqref{functor}, \eqref{condpair} that the space $\Cond(\R)$ of conditional real numbers has the structure of a real unital commutative algebra.  This is analogous to the more familiar fact that $L^0(X; \R)$ is also a real unital commutative algebra.  A similar argument (using Proposition \ref{group-mes} and Corollary \ref{square}) shows that if $K$ is a compact Hausdorff group then $\Cond(K_\Baire)$ is also a group, which will be abelian if $K$ is abelian, and the group operations are conditional functions in the sense given in \cite{drapeau2016algebra}.

Now we can give a conditional analogue of the Pontryagin duality relationship \eqref{iotak}.

\begin{theorem}[Conditional Pontryagin duality]\label{cond-pont}  Let $K$ be a compact Hausdorff abelian group, and let $\iota \colon K_\Baire \to \T^{\hat K}$ be the map
$$ \iota(k) \coloneqq ( \langle \hat k, k \rangle )_{\hat k \in \hat K}.$$
Then 
\begin{equation}\label{iotak-cond} \Cond(\iota)(\Cond(K_\Baire)) = \{ (\theta_{\hat k})_{\hat k \in \hat K} \in \Cond(\T)^{\hat K}: \theta_{\hat k_1 + \hat k_2} = \theta_{\hat k_1} + \theta_{\hat k_2} \forall \hat k_1, \hat k_2 \in \hat K \}
\end{equation}
where we use Proposition \ref{product} to identify $\Cond(\T^{\hat K})$ with $\Cond(\T)^{\hat K}$.  Also, $\Cond(\iota): \Cond(K_\Baire) \to \Cond(\T^{\hat K})$ is injective.
\end{theorem}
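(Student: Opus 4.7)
The plan is to exploit Corollary \ref{group-desc}(ii), which identifies $K_\Baire$ with its image $\iota(K)$ as a measurable subspace of $\T^{\hat K}$ (with the $\sigma$-algebra induced from the product $\sigma$-algebra). Combined with the identification $\Cond(\T^{\hat K}) = \Cond(\T)^{\hat K}$ from Proposition \ref{product}, this will let me describe $\Cond(\iota)(\Cond(K_\Baire))$ concretely via Proposition \ref{condk-desc}(ii).

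\textbf{Forward direction and injectivity.} Given $k \in \Cond(K_\Baire)$, I would set $\theta_{\hat k} := \hat k \circ k \in \Cond(\T)$ so that $\Cond(\iota)(k) = (\theta_{\hat k})_{\hat k \in \hat K}$ under the identification from Proposition \ref{product}. The classical pointwise identity $\hat k_1 + \hat k_2 = (+) \circ (\hat k_1, \hat k_2)$ of measurable maps $K_\Baire \to \T$ then transfers, via \eqref{functor} and \eqref{condpair}, to $\theta_{\hat k_1 + \hat k_2} = \theta_{\hat k_1} + \theta_{\hat k_2}$ in $\Cond(\T)$, establishing the ``$\subset$'' inclusion in \eqref{iotak-cond}. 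Injectivity of $\Cond(\iota)$ comes for free: Corollary \ref{group-desc}(ii) says that the preimages $\hat k^{-1}(E)$ generate $\Baire(K)$, so two $\sigma$-complete Boolean homomorphisms $k_1^*, k_2^*: \Baire(K) \to \X_\mu$ agreeing on this generating family (which is what $\hat k \circ k_1 = \hat k \circ k_2$ for all $\hat k$ asserts) must agree on all of $\Baire(K)$.

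\textbf{Reverse direction (main work).} Given a homomorphism tuple $(\theta_{\hat k})_{\hat k \in \hat K} \in \Cond(\T)^{\hat K}$, Proposition \ref{product} produces a unique $s \in \Cond(\T^{\hat K})$ with $\pi_{\hat k} \circ s = \theta_{\hat k}$. Since $\iota(K)$ is a closed subset of the product $\T^{\hat K}$ of compact metric spaces (Corollary \ref{group-desc}(ii)), Proposition \ref{condk-desc}(ii) reduces the task of showing $s \in \Cond(\iota(K))$ to verifying $s^*(\pi_I^{-1}(\pi_I(\iota(K)))) = 1$ for every countable $I \subset \hat K$. First I would enlarge $I$ to the subgroup of $\hat K$ it generates (still countable, and this only strengthens the desired identity since $\pi_{I'}^{-1}(\pi_{I'}(\iota(K))) \subset \pi_I^{-1}(\pi_I(\iota(K)))$ when $I \subset I'$), reducing to the case where $I$ is a countable subgroup. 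For such $I$, the divisibility of $\T$ (i.e., its injectivity as an abelian group) ensures that every homomorphism $I \to \T$ extends to $\hat K \to \T$, so $\pi_I(\iota(K)) = \Hom(I,\T)$ and hence
\[ \pi_I^{-1}(\pi_I(\iota(K))) = \bigcap_{\hat k_1, \hat k_2 \in I} B_{\hat k_1, \hat k_2}, \qquad B_{\hat k_1, \hat k_2} := \{\theta \in \T^{\hat K} : \theta_{\hat k_1 + \hat k_2} = \theta_{\hat k_1} + \theta_{\hat k_2}\}. \]
By $\sigma$-completeness of $\X_\mu$ and countability of $I \times I$, it suffices to verify $s^*(B_{\hat k_1, \hat k_2}) = 1$ for each pair.

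\textbf{Transferring the hypothesis.} Finally, I would write $B_{\hat k_1, \hat k_2} = f^{-1}(\{0\})$ for the measurable map $f(\theta) := \theta_{\hat k_1} + \theta_{\hat k_2} - \theta_{\hat k_1 + \hat k_2}$ on $\T^{\hat K}$; functoriality and Proposition \ref{product} give $f \circ s = \theta_{\hat k_1} + \theta_{\hat k_2} - \theta_{\hat k_1 + \hat k_2}$ in $\Cond(\T)$, which the homomorphism hypothesis forces to be the zero conditional element, whence $s^*(B_{\hat k_1, \hat k_2}) = (f\circ s)^*(\{0\}) = 1$. The main obstacle I anticipate is the reduction via Proposition \ref{condk-desc}(ii): one must repackage the pointwise (classical) homomorphism relations defining $\iota(K)$ into countably many measurable constraints that can be checked one pair at a time from the conditional hypothesis, and divisibility of $\T$ is the crucial ingredient that makes the image $\pi_I(\iota(K))$ algebraic enough to admit such a description.
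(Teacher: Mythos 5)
Your proposal is correct and follows essentially the same route as the paper: the forward inclusion and injectivity via Corollary \ref{group-desc}(ii), and the reverse inclusion via Proposition \ref{condk-desc}(ii), reduction to countable subgroups $I$, and divisibility of $\T$ to identify $\pi_I(\iota(K))$ with the homomorphism group $K_I$. Your explicit decomposition of $K_I$ into the countably many constraint sets $B_{\hat k_1,\hat k_2}$ merely spells out the step the paper compresses into the assertion that $(\theta_i)_{i\in I}$ is a conditional element of $K_I$.
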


\begin{proof}  For all $\hat k_1, \hat k_2 \in \hat K$, we have from definition of the group structure on $\hat K$ that
$$ \langle \hat k_1 + \hat k_2, k \rangle = \langle \hat k_1, k \rangle + \langle \hat k_2, k \rangle$$
for all classical elements $k \in K_\Baire$.  All expressions here are measurable in $k$, so the identity also holds for conditional elements $k \in \Cond(K_\Baire)$ (where by abuse of notation we write $\Cond(\langle \hat k, \cdot \rangle)$ simply as $\langle \hat k, \cdot \rangle$ for any $\hat k \in \hat K$).  From this we see that if $k \in \Cond(K_\Baire)$ then $\Cond(\iota)(k)$ lies in the set in the right-hand side of \eqref{iotak-cond}.

Now we establish the converse inclusion.  By Corollary \ref{group-desc}(ii), $\iota$ is a measurable space isomorphism between $K_\Baire$ and $\iota(K)$ (where the latter is given the measurable space structure induced from $\T^{\hat K}$). Thus $\Cond(\iota)$ is injective and $\Cond(\iota)(\Cond(K_\Baire)) = \Cond(\iota(K))$.  Let $\theta = (\theta_{\hat k})_{\hat k \in \hat K}$ be an element of the right-hand side of \eqref{iotak-cond}; we need to show that $\theta \in \Cond(\iota(K))$.  By Proposition \ref{condk-desc}(ii), it suffices to show that $\theta^* \pi_I^{-1}(\pi_I(\iota(K))) = 1$ for all at most countable $I \subset \hat K$.  By replacing $I$ with the group generated by $I$, which is still at most countable, it suffices to do so in the case when $I$ is an at most countable subgroup of $\hat K$.

Let $K_I \subset \T^I$ denote the group of homomorphisms from $I$ to $\T$, thus
$$ K_I = \{ (\xi_i)_{i \in I} \in \T^I: \xi_{i+j} = \xi_i + \xi_j \forall i,j \in I \}.$$
This is a closed subgroup of $\T^I$.  Because $\T$ is a divisible abelian group, we see from Zorn's lemma that every homomorphism from $I$ to $\T$ can be extended to a homomorphism from $\hat K$ to $\T$, thus $K_I = \pi_I(\iota(K))$.  From the hypotheses on $\theta$ we see that $(\theta_i)_{i \in I}$ is a conditional element of $K_I$, which by Proposition \ref{condk-desc}(i) implies that $(\theta_i)_{i \in I}^*  K_I=1$, and hence 
$$ \theta^* \pi_I^{-1}(\pi_I(\iota(K))) = \theta^* \pi_I^{-1}(K_I) = (\theta_i)_{i \in I}^* K_I = 1$$
giving the claim.
\end{proof}

\section{Proof of the uncountable Moore--Schmidt theorem}\label{uncountable-sec}

We now have enough tools to prove Theorem \ref{mstu}, by modifying the argument sketched in the introduction to prove Theorem \ref{mst}.  We may assume that the space $X$ has positive measure, since if $X$ has zero measure then every abstract cocycle is trivially an abstract coboundary.

Let $\Gamma$ be a discrete group acting abstractly on the measure algebra $X_\mu$ of an arbitrary measure space, and let $K$ be a compact Hausdorff abelian group.  If $\rho = (\rho_\gamma)_{\gamma \in \Gamma}$ is an abstract $K$-valued coboundary, then by definition there exists $F \in \Cond( K_\Baire )$ such that
$$ \rho_\gamma = F \circ T^\gamma - F $$
for all $\gamma \in \Gamma$, hence for each $\hat k \in \hat K$ we have
$$ \langle \hat k, \rho_\gamma\rangle = \langle \hat k, F\rangle \circ T^\gamma - \langle \hat k, F\rangle$$
for all $\gamma \in K$.  Thus each $\langle \hat k, \rho \rangle$ is an abstract $\T$-valued coboundary.

Conversely, suppose that for each $\hat k \in \hat K$, $\langle \hat k, \rho \rangle$ is an abstract $\T$-valued coboundary; thus we may find $\alpha_{\hat k} \in \Cond(\T)$ such that
\begin{equation}\label{hkr}
 \langle \hat k, \rho_\gamma\rangle = \alpha_{\hat k} \circ T^\gamma - \alpha_{\hat k}
 \end{equation}
 for all $\hat k \in \hat K$ and $\gamma \in \Gamma$.  If $\hat k_1, \hat k_2 \in \hat K$, then we have
$$ \langle \hat k_1 + \hat k_2, \rho_\gamma \rangle = \langle \hat k_1, \rho_\gamma \rangle + \langle \hat k_2, \rho_\gamma \rangle$$
which when combined with \eqref{hkr} and rearranging gives
$$ c(\hat k_1, \hat k_2) \circ T^\gamma = c(\hat k_1, \hat k_2)$$
where $c(\hat k_1, \hat k_2) \in \Cond(\T)$ is the conditional torus element
\begin{equation}\label{ckk}
c(\hat k_1, \hat k_2) \coloneqq \alpha_{\hat k_1 + \hat k_2} - \alpha_{\hat k_1} - \alpha_{\hat k_2}.
\end{equation}
Thus, if we define the invariant subgroup
$$ \Cond(\T)^\Gamma \coloneqq \{ \theta \in \Cond(\T): \theta \circ T^\gamma = \theta \; \forall \gamma \in \Gamma \}$$
of $\Cond(\T)$, then we have $c(\hat k_1, \hat k_2) \in \Cond(\T)^\Gamma$ for all $\hat k_1, \hat k_2 \in \hat K$.

We now claim that $\Cond(\T)^\Gamma$ is a divisible abelian group; thus for any $\theta \in \Cond(\T)^\Gamma$ and $n \in \N$, we claim that there exists $\beta \in \Cond(\T)^\Gamma$ such that $n\beta = \theta$. But one can easily construct a concrete measurable map $g_n \colon \T \to \T$ such that $n g_n(\theta) = \theta$ for all $\theta \in \T$ (for instance, one can set $g_n(x \mod \Z) \coloneqq \frac{x}{n} \mod \Z$ for $0 \leq x < 1$), and the claim then follows by setting $\beta \coloneqq \Cond(g_n)(\theta)$.

Since $\Cond(\T)^\Gamma$ is a divisible abelian subgroup of $\Cond(\T)$, we see from Zorn's lemma that there exists a retract homomorphism $w: \Cond(\T) \to \Cond(\T)^\Gamma$ (a homomorphism that is the identity on $\Cond(\T)^\Gamma$); see e.g.  \cite[p.~46--47]{halmos2013lectures}.  For each $\hat k \in \hat K$,
let $\tilde \alpha_{\hat k} \in \Cond(\T)$ denote the conditional torus element
\begin{equation}\label{tdiff}
 \tilde \alpha_{\hat k} \coloneqq \alpha_{\hat k} - w(\alpha_{\hat k}).
 \end{equation}
 Applying $w$ to both sides of \eqref{ckk} and subtracting, we conclude that
\begin{equation}\label{hka}
0 = \tilde \alpha_{\hat k_1 + \hat k_2} - \tilde \alpha_{\hat k_1} - \tilde \alpha_{\hat k_2}
\end{equation}
for all $\hat k_1, \hat k_2 \in \hat K$. By Theorem \ref{cond-pont}, we conclude that $(\tilde \alpha_{\hat k})_{\hat k \in \hat K}$ lies in $\Cond(\iota)(\Cond(K_\Baire))$, that is to say there exists $F \in \Cond(K_\Baire)$ such that
$$ \tilde \alpha_{\hat k} = \langle \hat k, F \rangle$$
for all $\hat k \in \hat K$.  On the other hand, from \eqref{hkr}, \eqref{tdiff} we have
$$  \langle \hat k, \rho_\gamma\rangle = \tilde \alpha_{\hat k} \circ T^\gamma - \tilde \alpha_{\hat k}
 $$
for all $\hat k \in K$ and $\gamma \in \Gamma$ and hence
\begin{equation}\label{hkb}
\langle \hat k, \rho_\gamma - (F \circ T^\gamma - F) \rangle = 0
\end{equation}
for all $\hat k \in \hat K$ and $\gamma \in \Gamma$.  Applying the injectivity claim of Theorem \ref{cond-pont}, we conclude that
$$ \rho_\gamma - (F \circ T^\gamma - F) = 0$$
for all $\gamma \in \Gamma$, and so $\rho$ is an abstract $K$-valued coboundary as required.

\section{Representing conditional elements of a space}\label{condrep-sec}

Throughout this section $X = (X, \Sigma_X, \mu)$ is assumed to be a measure space of positive measure.

If $Y = (Y,\Sigma_Y)$ is a concrete measurable space, and $f\colon X \to Y$ is a concrete measurable map, then the abstraction $[f] \in \Hom_{\AbsMes}(X_\mu; Y)=  \Cond(Y)$ defined in the introduction is a conditional element of $Y$, and can be defined explicitly as
$$ [f]^* E = [f^* E]$$
for $E \in \Sigma_Y$, where $[f^* E] \in X_\mu$ is the abstraction of $f^* E \in \Sigma_X$ in $X_\mu$.  Thus for instance $\Cond(c)$ is the abstraction of the constant function $x \mapsto c$ for all $c \in Y$.  It is clear that if $f,g\colon X \to Y$ are concrete measurable maps that agree $\mu$-almost everywhere, then $[f]=[g]$.  However, the converse is not true.  One trivial example occurs when $\Y$ fails to separate points:

\begin{example}[Non-uniqueness of realizations, I]  Let $Y = \{1,2\}$ with the trivial $\sigma$-algebra $\Sigma_Y = \{ \emptyset, Y \}$.  Then the constant concrete measurable maps $1$ and $2$  from $X$ to $Y$ are such that $[1]=[2]$, but $1$ is not equal to $2$ almost everywhere (if $X$ has positive measure).
\end{example}

However, there are also counterexamples when $\Sigma_Y$ does separate points, as the following example shows:

\begin{example}[Non-uniqueness of realizations, II]\label{bad-ex}  Let $X = [0,1]$ with Lebesgue measure $\mu$, and let $Y \coloneqq \{0,1\}^{[0,1]}$ with the product $\sigma$-algebra.  Let $f \colon X \to Y$ be the function defined by
$$ f(x) \coloneqq ( 1_{x=y} )_{y \in [0,1]}$$
for all $x \in [0,1]$, where the indicator $1_{x=y}$ equals $1$ when $x=y$ and zero otherwise, and let $g \colon X \to Y$ be the zero function $g(x) \coloneqq 0$.  Observe that $f(x) \neq g(x)$ for all $x \in [0,1]$, so $f$ and $g$ are certainly not equal almost everywhere.  However, the product $\sigma$-algebra in $Y = \{0,1\}^{[0,1]}$ is the union of the pullbacks of the $\sigma$-algebras on $\{0,1\}^I$ as $I$ ranges over at most countable subsets of $[0,1]$.  Thus if $E$ is measurable in $Y$, then $E = \pi_I^{-1}(E_I)$ for some measurable subset $E_I$ of $\{0,1\}^I$, where $\pi_I: \{0,1\}^{[0,1]} \to \{0,1\}^I$ is the projection map.  The function $\pi_I \circ f \colon X \to \{0,1\}^I$ is equal to $\pi_I \circ g = 0$ almost everywhere, thus $f^* E = (\pi_I \circ f)^*(E_I)$ is equal modulo null sets to $g^* E = (\pi_I \circ g)^* E_I$. We conclude that $[f]=[g]$, despite the fact that $f,g$ are not equal almost everywhere.
\end{example}

Note in the above example while $f$ and $g$ do not agree almost everywhere, each component of $f$ agrees with the corresponding component of $g$ almost everywhere, and it is the latter that allows us to conclude that $[f]=[g]$; this can also be derived from Proposition \ref{product}.  In particular, this example shows that the analogue of Proposition \ref{product} for the space $L^0(X; Y)$ of concrete measurable functions modulo almost everywhere equivalence fails.

For certain choices of $Y$, there exist conditional elements $y \in \Cond(Y)$ of $Y$ that are not represented by any concrete measurable map:

\begin{example}[Non-realizability]\label{nonrep}  Let $X = \mathrm{pt}$ be a point (with counting measure $\mu$), and let $Y \coloneqq \{0,1\}^{[0,1]} \backslash \{0\}^{[0,1]}$ be the product space $\{0,1\}^{[0,1]}$ with a point $\{0\}^{[0,1]}$ removed, endowed with the measurable structure induced from the product $\sigma$-algebra.  Observe that the point $\{0\}^{[0,1]} = \{0^{[0,1]}\}$ is not measurable in $\{0,1\}^{[0,1]}$ (all the measurable sets in this space are pullbacks of a measurable subset of $\{0,1\}^I$ for some countable $I \subset [0,1]$, and $\{0\}^{[0,1]}$ is not of this form).  Hence every measurable subset $E$ of $\{0,1\}^{[0,1]} \backslash \{0\}^{[0,1]}$ has a unique measurable extension $\tilde E$ to $\{0,1\}^{[0,1]}$.  Now let $y \in \Cond(Y)$ be the conditional element of $Y$ defined by
$$ y^* E = 1_{0^{[0,1]} \in \tilde E};$$
this is easily seen to be an element of $\Cond(Y)$.  However, it does not have any concrete realization $f \colon X \to Y$.  For if we had $y = [f]$, then we must have $1_{0^{[0,1]} \in \tilde E} = 1_{f(0) \in E}$ for every measurable subset $E$ of $\{0,1\}^{[0,1]}$. But $f(0) \in Y$ must have at least one coefficient equal to $1$, and is thus contained in a cylinder set $E$ whose extension $\tilde E$ does not contain $0^{[0,1]}$, a contradiction.
\end{example}

Nevertheless, we are able to locate some situations in which conditional elements of $Y$ are represented by concrete measurable maps.  From Proposition \ref{metr} we already can do this whenever $Y$ is a Polish space.  We can also recover a concrete realization of a conditional element of $K_\Baire$ in the case that $K$ is a compact Hausdorff abelian group.

\begin{proposition}[Conditional elements of compact abelian groups]\label{alt}  Let $K$ be a compact Hausdorff abelian group.  Then every conditional element $k \in \Cond(K_\Baire)$ has a realization by a concrete measurable map $f \colon X \to K_\Baire$.   
\end{proposition}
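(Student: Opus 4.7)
The plan is to lift $k$ coordinate-by-coordinate via the Pontryagin embedding $\iota: K \hookrightarrow \T^{\hat K}$ from Corollary \ref{group-desc}(ii), and then to assemble the resulting representatives into a single pointwise-coherent map using a transfinite induction that leverages the divisibility of $\T$.

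First, for each character $\hat k \in \hat K$, the composition $\hat k \circ k$ is \emph{a priori} only a weak conditional element of $\T$; but since $\T$ is compact metric, Proposition \ref{metr} implies $\Cond_w(\T) = \Cond(\T)$, so $\hat k \circ k$ has a concrete measurable representative $f_{\hat k}: X \to \T$, unique up to $\mu$-a.e.~equivalence. These representatives automatically satisfy $f_{\hat k_1 + \hat k_2} = f_{\hat k_1} + f_{\hat k_2}$ $\mu$-a.e.~for each \emph{individual} pair; however, to obtain a Baire-measurable map $f: X \to K$ with $\iota \circ f = (f_{\hat k})_{\hat k}$, we need these relations to hold \emph{pointwise} across all pairs simultaneously, and the uncountability of $\hat K$ prevents us from naively intersecting the corresponding co-null sets.

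To overcome this, I would well-order $\hat K$ and build the family $(f_\gamma)_{\gamma \in \hat K}$ by transfinite induction so that (i) $[f_\gamma] = \gamma \circ k$ and (ii) $f_{\gamma_1 + \gamma_2}(x) = f_{\gamma_1}(x) + f_{\gamma_2}(x)$ for \emph{every} $x$. At a successor stage, let $\Gamma_\alpha := \langle \hat k_\beta : \beta \leq \alpha \rangle$. If $\hat k_{\alpha+1} \in \Gamma_\alpha$ the value is already forced; otherwise let $n \geq 2$ be the least integer (if any) with $n \hat k_{\alpha+1} = \gamma_0 \in \Gamma_\alpha$, in which case $n \tilde f_{\hat k_{\alpha+1}} = f_{\gamma_0}$ $\mu$-a.e.~for an arbitrary starting representative $\tilde f_{\hat k_{\alpha+1}}$, and I would redefine it on the null exceptional set by a measurable $n$-th root section of $f_{\gamma_0}$ (which exists because $\T$ is divisible), enforcing the torsion relation pointwise without altering the $\mu$-a.e.~class. (If no such $n$ exists, any representative works.) Extending $\Z$-linearly to $\Gamma_{\alpha+1}$, and taking unions at limit ordinals, yields a consistent family. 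The resulting concrete map $F(x) := (f_{\hat k}(x))_{\hat k}$ lands pointwise in $\iota(K)$ by the Pontryagin characterization \eqref{iotak}, hence $f := \iota^{-1} \circ F: X \to K_\Baire$ is a well-defined Baire-measurable concrete map.

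To verify $[f] = k$ in $\Cond_w(K_\Baire)$, note that by construction both agree on every generator $\hat k^{-1}(U)$ of $\Baire(K)$. For an arbitrary $E \in \Baire(K)$, pick a countable subgroup $\hat K_0 \subset \hat K$ with $E \in \sigma(\hat K_0)$: since $\sigma(\hat K_0)$ is the pullback of the Borel $\sigma$-algebra on the compact metric quotient $K / \hat K_0^\perp$ (whose Pontryagin dual is the countable discrete group $\hat K_0$), Proposition \ref{metr} forces the restrictions $k^*|_{\sigma(\hat K_0)}$ and $[f]^*|_{\sigma(\hat K_0)}$ to both be $\sigma$-complete, and agreement on the countably many generators of $\sigma(\hat K_0)$ then propagates to all of $\sigma(\hat K_0)$, hence to $E$. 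The main obstacle throughout is the transfinite induction: simultaneously preserving the $\mu$-a.e.~class of each $f_{\hat k}$ while enforcing pointwise homomorphism relations in the presence of possibly complicated torsion in $\hat K$, for which the divisibility of $\T$ is the critical ingredient.
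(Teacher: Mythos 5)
Your proposal is correct and follows essentially the same route as the paper: realize each $\langle \hat k, k\rangle$ concretely via Proposition \ref{metr}, extend one character at a time to a family of representatives satisfying the homomorphism relations pointwise (using the divisibility of $\T$ and a measurable $n$-th root section to repair the null exceptional set in the torsion case), and assemble the result via the Pontryagin description \eqref{iotak}; the paper organizes the extension by Zorn's lemma on partial solutions rather than a well-ordered transfinite induction, which is an immaterial difference. Your final verification that $[f]=k$ via restriction to countably generated subalgebras $\sigma(\hat K_0)$ is a slightly more explicit rendering of the step the paper dispatches by citing Theorem \ref{cond-pont}, and is a welcome clarification of why agreement on generators suffices when $k$ is a priori only a weak conditional element.
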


\begin{proof} Fix $K,k$.  Then $\langle \hat k, k \rangle \in \Cond(\T)$ for each $\hat k \in \hat K$ (where by abuse of notation we identify $\langle \hat k, \cdot \rangle$ with $\Cond(\langle \hat k, \cdot \rangle))$.  We will apply Zorn's lemma (in the spirit of the standard proof of the Hahn-Banach theorem) to the following setup.  Define a \emph{partial solution} to be a tuple $(G, (f_g)_{g \in G})$, where
\begin{itemize}
    \item $G$ is a subgroup of $\hat K$.
    \item For each $g \in G$, $f_g \colon G \to \T$ is a concrete measurable map with $[f_g] = \langle g, k \rangle$.
    \item For each $g_1,g_2 \in G$, one has $f_{g_1+g_2}(x) = f_{g_1}(x) + f_{g_2}(x)$ for \emph{every} $x \in X$ (not just $\mu$-almost every $x$).
\end{itemize}
We place a partial order on partial solutions by setting $(G, (f_g)_{g \in G}) \leq (G', (f'_{g'})_{g' \in G'})$ if $G \leq G'$ and $f_g = f'_g$ for all $g \in G$.  Since $(\{0\}, (0))$ is a partial solution, and every chain of partial solutions has an upper bound, we see from Zorn's lemma that there exists a maximal partial solution $(G, (f_g)_{g \in G})$.  We claim that $G$ is all of $\hat K$.  Suppose this is not the case, then we can find an element $\hat k$ of $\hat K$ that lies outside of $G$.  There are two cases, depending on whether $n \hat k \in G$ for some natural number $n$.

First suppose that $n \hat k \not \in G$ for all $n \in \N$.  By Proposition \ref{metr}, we can find a concrete measurable map $f_{\hat k} \colon X \to \T$ such that $[f_{\hat k}] = \langle \hat k, k \rangle$.  We then define $f_{n \hat k + g} \colon X \to \T$ for all $n \in \Z \backslash \{0\}$ and $g \in G$ by the formula
\begin{equation}\label{nkform}
 f_{n \hat k + g}(x) \coloneqq n f_{\hat k}(x) + f_g(x).
 \end{equation}
 If we set 
 \begin{equation}\label{gp-def}
 G' = \{ n \hat k + g: n \in \Z, g \in G \}
 \end{equation}
 to be the group generated by $\hat k$ and $G$, we can easily check that $(G', (f_{g'})_{g' \in G})$ is a partial solution that is strictly larger than $(G, (f_g)_{g \in G})$, contradicting maximality.

Now suppose that there is a least natural number $n_0$ such that $n_0 \hat k \in G$.  We can find a concrete measurable map $\tilde f_{\hat k} \colon X \to \T$ such that $[\tilde f_{\hat k}] = \langle \hat k, k \rangle$.  This map cannot immediately be used as our candidate for $f_{\hat k}$ because it does not necessarily obey the consistency condition $n_0 \tilde f_{\hat k}(x) = f_{n_0 \hat k}(x)$ for all $x \in X$.  However, this identity is obeyed for \emph{almost all} $x \in X$.  Let $N$ be the null set on which the identity fails.  We then set $f_{\hat k}(x)$ to equal $\tilde f_{\hat k}(x)$ when $x \not \in N$ and equal to $g_{n_0}( f_{n_0 \hat k}(x) )$ when $x \in N$, where (as in the previous section) $g_{n_0}: \T \to \T$ is a measurable map for which $n_0 g_{n_0}(\theta) = \theta$ for all $\theta \in \T$.  Then $[f_{\hat k}] = [\tilde f_{\hat k}] = \langle \hat k, k \rangle$.  If one then defines $f_{n \hat k + g}$ for all $ n \in \Z$ and $g \in G$ by the same formula as before, we see that this is a well defined formula for $f_{g'}$ for all $g'$ in the group \eqref{gp-def}, and that $(G', (f_{g'})_{g' \in G})$ is a partial solution that is strictly larger than $(G, (f_g)_{g \in G})$, again contradicting maximality.  This completes the proof that $G = \hat K$.

By Pontryagin duality \eqref{iotak}, for each $x \in X$ there is a unique element $f(x) \in K$ such that $f_{\hat k}(x) = \langle \hat k, f(x) \rangle$ for all $\hat k \in \hat K$.  This gives a map $f \colon X \to K_\Baire$; as all the maps $\langle k, f \rangle = f_{\hat k}$ are measurable, we see that $f$ is also measurable as the $\sigma$-algebra of $K_\Baire$ is generated by the characters $\hat k$.  From Theorem \ref{cond-pont} we see that $[f]=k$, and the claim follows.
\end{proof}

One can ask if the proposition holds for all compact Hausdorff spaces, not just the compact Hausdorff abelian groups.  We were unable\footnote{We thank the referee for pointing out a serious error in the results claimed in this direction in a previous version of this manuscript.} to make significant headway on this question, but can at least treat the simple case when the base space $X$ is atomic:

\begin{lemma}[The case of an atomic space]\label{simp}  Let $K$ be a compact Hausdorff space and suppose that $X$ is a $\sigma$-finite atomic measure space.  Then every element of $\Cond(K_\Baire)$ is represented by a concrete measurable map from $X$ to $K_\Baire$, unique up to almost everywhere equivalence.
\end{lemma}

Note that Example \ref{nonrep} shows that the requirement that $K$ be compact cannot be completely omitted in this lemma.

\begin{proof}  By contracting all atoms in $X$ down to points and removing all null sets, we may assume without loss of generality that $X$ is countable and discrete, with all points having positive measure. (In particular $X$ has no non-trivial null sets, and all functions on $X$ are measurable.)

From Theorem \ref{weil-thm} we see that any two distinct functions $F, F' \colon X \to K$ are separated at at least one point $x \in X$ by preimages of disjoint balls with respect to a continuous map $\pi\colon K \to S$ into a metric space, and hence are also distinct as elements of $\Cond(K_\Baire)$ as such preimages are measurable and every point in $X$ has positive measure. This gives uniqueness.  It remains to show that every conditional element $k \in \Cond(K_\Baire)$ of $K_\Baire$  arises from a function from $X$ to $K$.  By Theorem \ref{weil-thm}, we may assume that $K_\Baire$ is a closed subset of $S_A = \prod_{\alpha \in A} S_\alpha$ for some metric spaces $S_\alpha$, with the product $\sigma$-algebra.  For each $\alpha \in A$, let $\pi_\alpha \colon K_\Baire \to S_\alpha$ be the coordinate map, then $\pi_\alpha(k) \in \Cond(S_\alpha)$. By Proposition \ref{metr} there is a unique function $s_\alpha\colon X \to S_\alpha$ such that $\pi_\alpha(k) = [s_\alpha]$.  If we set $s \colon X \to S_A$ to be the tuple $s \coloneqq (s_\alpha)_{\alpha \in A}$, then by Proposition \ref{product} we have $k = [s]$.  By Proposition \ref{condk-desc}, this implies that $\pi_I(s)$ takes values everywhere in $\pi_I( K )$ for all countable $I \subset A$, and hence by the closed nature of $K$ we see that $s$ takes values in $K$ everywhere.  Thus $k$ has a representation as a measurable map from $X$ to $K_\Baire$ as required.
\end{proof}

\section{Towards a concrete version of the uncountable Moore--Schmidt theorem}\label{concrete-sec}

One can raise the conjecture of whether Theorem \ref{mstu} continues to hold if we use concrete actions, coboundaries, and cocycles:

\begin{conjecture}[Concrete uncountable Moore--Schmidt conjecture]\label{mstu-concrete}
Let $\Gamma$ be a discrete group acting concretely on a measure space $X = (X,\Sigma_X,\mu)$, and let $K$ be a compact Hausdorff abelian group.  Then a concrete $K_\Baire$-valued cocycle $\rho = (\rho_\gamma)_{\gamma \in \Gamma}$ on $X$ is an concrete coboundary if and only if the $\T$-valued concrete cocycles $\hat k \circ \rho \coloneqq (\hat{k}\circ \rho_\gamma)_{\gamma \in \Gamma}$ are concrete coboundaries for all $\hat{k} \in \hat K$.
\end{conjecture}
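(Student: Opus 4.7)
The plan is to reduce Conjecture~\ref{mstu-concrete} to the abstract Moore-Schmidt theorem (Theorem~\ref{mstu}) by abstracting all the data, and then to descend back to the concrete setting by invoking the realization result for compact Hausdorff abelian groups (Proposition~\ref{alt}). The ``only if'' direction is immediate: if $\rho_\gamma = F \circ T^\gamma - F$ $\mu$-a.e.\ for some concrete $F \colon X \to K_\Baire$, then for any $\hat k \in \hat K$ the continuous character $\hat k \colon K_\Baire \to \T$ is Baire measurable, whence $\hat k \circ \rho_\gamma = (\hat k \circ F) \circ T^\gamma - (\hat k \circ F)$ $\mu$-a.e.

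For the ``if'' direction, I would first reduce to the case that $X$ has positive measure (otherwise $F \equiv 0$ works trivially) and then abstract all the data. By functoriality of $f \mapsto [f]$, the concrete action $(T^\gamma)_{\gamma \in \Gamma}$ descends to an abstract action $([T^\gamma])_{\gamma \in \Gamma}$ on $X_\mu$, and the concrete cocycle $\rho$ descends to an abstract cocycle $[\rho] = ([\rho_\gamma])_{\gamma \in \Gamma}$; here the $\mu$-a.e.\ cocycle equation becomes an exact identity in $\Cond(K_\Baire)$ because abstraction is a group homomorphism on concrete $K_\Baire$-valued maps, using that the group operations on $K_\Baire$ are measurable (Proposition~\ref{group-mes}) together with the product identification of Corollary~\ref{square}. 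Similarly, for each $\hat k \in \hat K$, the hypothesised concrete coboundary function $F_{\hat k} \colon X \to \T$ for $\hat k \circ \rho$ abstracts to $[F_{\hat k}] \in \Cond(\T)$ witnessing $\hat k \circ [\rho]$ as an abstract $\T$-valued coboundary. Theorem~\ref{mstu} then supplies an abstract $F \in \Cond(K_\Baire)$ with $[\rho_\gamma] = F \circ [T^\gamma] - F$ for every $\gamma \in \Gamma$.

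To finish, I would apply Proposition~\ref{alt} to extract a concrete measurable $f \colon X \to K_\Baire$ with $[f] = F$. Functoriality of abstraction in the composition variable gives $[f] \circ [T^\gamma] = [f \circ T^\gamma]$, and the group-homomorphism property gives $[f \circ T^\gamma] - [f] = [f \circ T^\gamma - f]$, so combining these with the abstract identity yields $[\rho_\gamma] = [f \circ T^\gamma - f]$. This is equivalent to $\rho_\gamma = f \circ T^\gamma - f$ $\mu$-a.e.\ for each $\gamma$, exhibiting $f$ as a concrete coboundary for $\rho$. The pivotal step is the concrete realization via Proposition~\ref{alt}; this is where the \emph{abelian} hypothesis on $K$ is essential, since its proof builds $f$ by a Zorn-style construction over $\hat K$ that exploits Pontryagin duality. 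For general compact Hausdorff $K$ the analogous realizability statement is, by Proposition~\ref{equiv}, equivalent to the existence of a lifting of $\X_\mu$, which is independent of ZFC for natural examples such as Lebesgue measure on $[0,1]$; it is therefore precisely the availability of Proposition~\ref{alt} in the abelian setting that allows this final descent step to be carried out unconditionally.
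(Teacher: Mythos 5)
Your argument tracks the paper's own discussion in Section \ref{concrete-sec} essentially verbatim up to the last step: abstract the data, apply Theorem \ref{mstu} to get $F \in \Cond(K_\Baire)$ with $[\rho_\gamma] = F \circ T^\gamma - F$, and use Proposition \ref{alt} to realize $F$ by a concrete $f \colon X \to K_\Baire$. The paper does exactly this and then stops, stating explicitly that it is \emph{unable} to conclude — which is why the statement is a conjecture rather than a theorem.

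The gap is your final sentence asserting that $[\rho_\gamma] = [f \circ T^\gamma - f]$ ``is equivalent to'' $\rho_\gamma = f \circ T^\gamma - f$ $\mu$-almost everywhere. Only one direction of that equivalence holds. Equality of abstractions of two concrete maps into $K_\Baire$ means their preimages of each Baire set agree modulo null sets; by Proposition \ref{metr} this forces a.e.\ equality when the target is Polish, but for non-metrizable $K$ it does not. Example \ref{bad-ex} exhibits two concrete maps into $\{0,1\}^{[0,1]}$ with the same abstraction that disagree at \emph{every} point. Concretely, all you can extract from $[\rho_\gamma] = [f \circ T^\gamma - f]$ is that for each fixed $\hat k \in \hat K$ the function $\langle \hat k, \rho_\gamma - (f \circ T^\gamma - f)\rangle$ vanishes outside a null set $N_{\hat k, \gamma}$; since $\hat K$ is uncountable, $\bigcup_{\hat k} N_{\hat k,\gamma}$ need not be null, and you cannot conclude that $\rho_\gamma - (f \circ T^\gamma - f)$ vanishes a.e.\ as a $K$-valued map. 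This is precisely the residual difficulty the paper isolates as Conjecture \ref{mstu-concrete-red}, which it resolves only in special cases ($K$ metrizable, or $\Gamma$ countable with $X$ complete and $K$ a torus) and suspects may be sensitive to axioms independent of ZFC. Your proposal therefore does not prove the conjecture; it reproves the paper's partial reduction and then asserts the open step.
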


The ``only if'' part of the conjecture is easy; the difficulty is the ``if'' direction.  If $\rho = (\rho_\gamma)_{\gamma \in \Gamma}$ is a concrete coboundary with the property that $\hat k \circ \rho$ is a concrete coboundary for all $\hat k \in \hat K$, then the abstraction $[\rho] \coloneqq ([\rho_\gamma])_{\gamma \in \Gamma}$ is clearly an abstract coboundary with $\hat k \circ [\rho] = [\hat k \circ \rho]$ an abstract coboundary for all $\hat k \in \hat K$.  Applying Theorem \ref{mstu}, we conclude that $[\rho]$ is an abstract coboundary, thus there exists an abstract measurable map $F \in \Hom_{\AbsMes}(X_\mu; K_\Baire)$ such that
$$ [\rho_\gamma] = F \circ T^\gamma - F$$
for all $\gamma \in \Gamma$.  By Proposition \ref{alt}, we may then find a concrete measurable map $\tilde F \colon X \to K_\Baire$ such that $[\tilde F] = F$.  If we then introduce the concrete coboundary 
$$ \tilde \rho \coloneqq ( \tilde F \circ T^\gamma - \tilde F )_{\gamma \in \Gamma}$$
then we see that $[\rho] = [\tilde \rho]$.  If we could conclude that $\rho = \tilde \rho$, we could establish Conjecture \ref{mstu-concrete}. We are unable to do this, but by subtracting $\tilde \rho$ from $\rho$ we see that to prove the above conjecture it suffices to do so in the case $\tilde \rho=0$, which implies that $[\langle \hat k, \rho_\gamma \rangle] = 0$, or equivalently (by Proposition \ref{metr}) that $\langle \hat k, \rho_\gamma \rangle$ vanishes almost everywhere for each $\hat k, \gamma$.  Thus Conjecure \ref{mstu-concrete} can be equivalently formulated as

\begin{conjecture}[Concrete uncountable Moore--Schmidt conjecture, reduced version]\label{mstu-concrete-red}
Let $\Gamma$ be a discrete group acting concretely on a measure space $X = (X,\Sigma_X,\mu)$, and let $K$ be a compact Hausdorff abelian group.  Let $\rho = (\rho_\gamma)_{\gamma \in \Gamma}$ be a concrete $K_\Baire$-valued cocycle on $X$ with the property that $\langle \hat k, \rho_\gamma \rangle$ vanishes $\mu$-almost everywhere for each $\hat k \in \hat K$ and $\gamma \in \Gamma$.  Then $\rho$ is a concrete coboundary.
\end{conjecture}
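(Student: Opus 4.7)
The natural plan is to run the abstract proof from Section \ref{uncountable-sec} and then lift the conclusion to the concrete category. Since each $\hat k\circ\rho_\gamma$ vanishes $\mu$-a.e.\ and $\T$ is Polish, Proposition \ref{metr} gives $[\hat k\circ\rho_\gamma]=0$ in $\Cond(\T)$, and the injectivity clause of the conditional Pontryagin duality (Theorem \ref{cond-pont}) then forces $[\rho_\gamma]=0$ in $\Cond(K_\Baire)$ for each $\gamma$. Thus the abstractions are already trivial, with $F=0$ serving as the abstract trivializer, and the only remaining work is to produce a concrete $F\colon X\to K_\Baire$ with $\rho_\gamma=F\circ T^\gamma-F$ $\mu$-a.e.\ for each $\gamma$.

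The obvious attempt $F=0$ reduces the problem to showing $\rho_\gamma=0$ $\mu$-a.e.\ as a concrete $K_\Baire$-valued map, i.e.\ that there is a conull measurable $E\subset X$ on which $\rho_\gamma$ vanishes pointwise. This is precisely where the obstruction concentrates: each character $\hat k$ gives a conull set $N_{\hat k}$ on which $\hat k\circ\rho_\gamma$ vanishes, but $\{x:\rho_\gamma(x)\neq 0\}\subset\bigcup_{\hat k\in\hat K}N_{\hat k}^c$ is an uncountable union of null sets when $\hat K$ is uncountable, and need not be null. Replacing $F=0$ by a nontrivial $F$ does not help: applying $\hat k$ to $\rho_\gamma-(F\circ T^\gamma-F)$ reduces the putative coboundary identity to a character-by-character condition that already holds abstractly, so establishing it pointwise still amounts to controlling an uncountable intersection of conull sets, the pathology already illustrated by Example \ref{bad-ex}.

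A candidate way to salvage the statement is to add a structural hypothesis on $(X,\X,\mu)$ allowing passage from abstract to concrete maps. Proposition \ref{equiv} characterises the relevant condition as the existence of a Boolean lifting $\pi^*\colon\X_\mu\to\X$, which by Corollary \ref{large} is automatic when $(X,\X,\mu)$ is $\sigma$-finite and complete. Under such a hypothesis one can compose the abstract trivializer with $\pi^*$, use Proposition \ref{alt} to obtain a concrete $F$, and then verify that $\rho_\gamma=F\circ T^\gamma-F$ holds $\mu$-a.e. This would prove Conjecture \ref{mstu-concrete-red} in the complete $\sigma$-finite setting, and is the best positive result I would plan to obtain.

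The main obstacle I anticipate is that the unrestricted conjecture appears to be false. Take $X=[0,1]$ with Lebesgue measure, $\Gamma=\Z$ acting trivially, $K=\T^{[0,1]}$, and $\rho_1(x)\coloneqq(\tfrac12\cdot 1_{x=y})_{y\in[0,1]}$, extended by $\rho_n\coloneqq n\rho_1$. Since $2\rho_1\equiv 0$ in $\T^{[0,1]}$ this is a cocycle satisfying $\rho_{n+m}=\rho_n+\rho_m$ pointwise; each preimage of a countable cylinder set in $\T^{[0,1]}$ is measurable, so $\rho_n$ is Baire-measurable; and each character depends on only finitely many coordinates, so $\hat k\circ\rho_n$ is supported on a finite set and hence vanishes $\mu$-a.e. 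Yet $\rho_1(x)\neq 0$ for every $x\in[0,1]$, so with $T=\mathrm{id}$ the only possible coboundary $F\circ T-F=0$ cannot equal $\rho_1$ $\mu$-a.e. I therefore expect any proof of Conjecture \ref{mstu-concrete-red} as stated to run into this obstacle, and the honest outcome of the plan above is a proof of the conjecture under a lifting hypothesis together with a refutation in the unrestricted case.
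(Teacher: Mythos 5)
First, be aware that the paper does not prove this statement: it is posed as Conjecture \ref{mstu-concrete-red} and deliberately left open, with only two special cases verified ($K$ metrizable, where $\hat K$ is countable so the exceptional null sets can be combined; and $\Gamma$ countable, $(X,\X,\mu)$ complete, $K$ a torus $\T^A$, handled by an orbit-by-orbit construction of the transfer function). Your analysis of the obstruction is accurate, and your counterexample checks out: with the trivial $\Z$-action on $[0,1]$ and $\rho_1(x)=(\tfrac12 1_{x=y})_{y\in[0,1]}$, every character of $\T^{[0,1]}$ depends on only finitely many coordinates, so $\langle \hat k,\rho_1\rangle$ is supported on a finite set and vanishes a.e., yet $\rho_1(x)\neq 0$ for every $x$, so the required identity $\rho_1=F\circ T^1-F=0$ a.e.\ fails for every $F$. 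This refutes the conjecture as stated, and it moreover collides with the paper's claimed torus case: the paper's definition $F_\alpha(T^\gamma x)\coloneqq \rho_{\gamma,\alpha}(x)$ on each orbit is not well-defined when some $\gamma$ stabilizing $x$ has $\rho_{\gamma,\alpha}(x)\neq 0$, which is exactly the situation your example manufactures. So this portion of your proposal goes beyond the paper rather than falling short of it.

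The genuine gap is in your proposed salvage. You claim that under a lifting hypothesis on $(X,\X,\mu)$ (which by Corollary \ref{large} holds for all complete $\sigma$-finite spaces) the conjecture would follow; but your own counterexample lives on $[0,1]$ with Lebesgue measure, which is complete and $\sigma$-finite, so the salvage cannot be correct. The step that fails is the final ``verify that $\rho_\gamma=F\circ T^\gamma-F$ holds $\mu$-a.e.'': realizability of the abstract trivializer (which Proposition \ref{alt} already supplies with no hypothesis on $X$ whatsoever) only yields the abstract identity $[\rho_\gamma]=[F\circ T^\gamma-F]$, and abstraction is not injective on a.e.-equivalence classes of $K_\Baire$-valued maps (Example \ref{bad-ex}); in your example the trivializer is $F=0$ while $\rho_1$ is nowhere zero. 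A lifting of $\X_\mu$ addresses the realizability of abstract maps (Proposition \ref{equiv}), not this injectivity failure, so any genuine positive case must constrain the characters, the cocycle, or the action --- as in the metrizable-$K$ case, or for free actions where the orbit construction is actually well-defined --- rather than merely the base space.
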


One easily verified case of this conjecture is when $K$ is metrizable.  Then $\hat K$ is countable, so for each $\gamma \in \Gamma$ we see that for almost every $x \in X$, $\langle \hat k, \rho_\gamma(x) \rangle=0$ for all $\hat k \in \hat K$ simultaneously, and so $\rho_\gamma(x)=0$ for almost every $x$, which of course implies that $\rho$ is a coboundary.  Note that this allows us to recover Theorem \ref{mst} from Theorem \ref{mstu}.

Another easy case is when $\Gamma$ is countable, $(X,\Sigma_X,\mu)$ is complete, and $K$ is a torus $K = \T^A$ for some (possibly uncountable) $A$.  By hypothesis, the cocycle equation
\begin{equation}\label{cocycle} \rho_{\gamma_1\gamma_2}(x) = \rho_{\gamma_1} \circ T^{\gamma_2}(x) + \rho_{\gamma_2}(x)
\end{equation}
holds for each $\gamma_1,\gamma_2 \in \Gamma$ for $x$ outside of a null set.  Since $\Gamma$ is countable, we may make this null set independent of $\gamma_1,\gamma_2$, and can also make it $\Gamma$-invariant.  We may then delete this set from $X$ and assume without loss of generality that \eqref{cocycle} holds for \emph{all} $x \in X$.  Now we write $\rho$ in coordinates as $\rho_\gamma(x) = (\rho_{\gamma,\alpha}(x))_{\alpha \in A}$.  Then for each $\alpha \in A$, $\rho_{\gamma,\alpha}(x)$ vanishes for $x$ outside of a null set $N_\alpha$, which as before we can assume to be independent of $\gamma$ and $\Gamma$-invariant.  By the axiom of choice, we may partition $N_\alpha$ into disjoint orbits of $\Gamma$:
$$ N_\alpha = \bigcup_{x \in M_\alpha} \{ T^\gamma x: \gamma \in \Gamma \}$$
where $M_\alpha$ is a subset of $N_\alpha$.  If we then define the map $F_\alpha\colon X \to \T$ by setting
$$ F_\alpha( T^\gamma x ) \coloneqq \rho_{\gamma,\alpha}(x)$$
for $x \in M_\alpha$ and $\gamma \in \Gamma$, and $F_\alpha(x) = 0$ for $x \not \in N_\alpha$, then by the completeness of $(X,\X,\mu)$ we see that $F_\alpha$ is measurable (being zero almost everywhere) and from the cocycle equation we see that
$$ \rho_{\gamma,\alpha}(x) = F_\alpha(T^\gamma x) - F_\alpha(x)$$
for all $x \in X$, $\gamma \in \Gamma$, $\alpha \in A$.  Setting $F: X \to K_\Baire$ to be the map $F(x) \coloneqq (F_\alpha(x))_{\alpha \in A}$, we conclude that $\rho_\gamma(x) = F(T^\gamma(x)) - F(x)$ for all $\gamma \in \Gamma$ and $x \in X$, so that $\rho$ is a concrete coboundary as claimed in this case. 

It is conceivable that the truth of this conjecture is sensitive to undecidable axioms in set theory.

\appendix

\section{A counterexample to a general product theorem for conditional elements}\label{counter}

In this appendix we establish

\begin{proposition}[Counterexample to general product theorem]\label{counter-prop} Let $(X,\Sigma_X,\mu)$ be the unit interval $[0,1)$ with the Borel $\sigma$-algebra $\Sigma_X$ and Lebesgue measure $\mu$.  Then there exist concrete measurable spaces $(Y_1,\Sigma_{Y_1}),(Y_2,\Sigma_{Y_2})$ and conditional elements $y_1 \in \Cond(Y_1), y_2 \in \Cond(Y_2)$ such that there does not exist any conditional element $y \in \Cond(Y_1 \times Y_2)$  with $\pi_1(y)=y_1$ and $\pi_2(y)=y_2$, where $\pi_i\colon Y_1 \times Y_2 \to Y_i$ are the coordinate projections for $i=1,2$.
\end{proposition}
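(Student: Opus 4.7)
The plan is to exhibit the counterexample by enlarging the Borel $\sigma$-algebra on $[0,1)$. Let $V \subset [0,1)$ be a Bernstein set, so that neither $V$ nor $V^c$ contains any non-empty perfect subset; by the perfect set property, every Borel subset of $V$ or of $V^c$ is then countable, hence $\mu$-null. I take $Y_1 = Y_2 := [0,1)$ equipped with the $\sigma$-algebra $\Y := \sigma(\X \cup \{V\})$. A direct induction shows that $\Y$ consists precisely of the sets of the form $(V \cap B_1) \cup (V^c \cap B_2)$ with $B_1, B_2 \in \X$, and the Bernstein property ensures that the pair $([B_1],[B_2]) \in \X_\mu \times \X_\mu$ is uniquely determined by this representation (any two choices of $B_1$ differ by a Borel subset of $V^c$, hence by a $\mu$-null set; similarly for $B_2$).

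Using this unique decomposition, I will define $y_i \in \Cond((Y,\Y))$ for $i=1,2$ by
\[ y_i^*\bigl((V \cap B_1) \cup (V^c \cap B_2)\bigr) \,:=\, (a_i \wedge [B_1]) \vee (\overline{a_i} \wedge [B_2]), \]
where $a_1 := [[0, 1/2)]$ and $a_2 := [[1/2, 1)]$ in $\X_\mu$, chosen so that $a_1 \wedge a_2 = 0$ and $\overline{a_1} \wedge \overline{a_2} = 0$. A direct computation using distributivity in $\X_\mu$ shows that each $y_i^*$ is a $\sigma$-complete Boolean algebra homomorphism from $\Y$ to $\X_\mu$, hence a bona fide abstract measurable map $y_i \colon X_\mu \to (Y, \Y)$.

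To rule out any lift, I will assume for contradiction that $y \in \Cond(Y_1 \times Y_2)$ satisfies $\pi_i \circ y = y_i$ for $i=1,2$. On any rectangle $E \times F \in \Y \otimes \Y$, $y^*(E \times F)$ is forced to equal $y_1^*(E) \wedge y_2^*(F)$; in particular $y^*(V \times V) = a_1 \wedge a_2 = 0$ and $y^*(V^c \times V^c) = \overline{a_1} \wedge \overline{a_2} = 0$. Since the diagonal $\Delta := \{(x,x) : x \in [0,1)\}$ lies in $\X \otimes \X \subset \Y \otimes \Y$ and is contained in $(V \times V) \cup (V^c \times V^c)$, monotonicity of $y^*$ yields $y^*(\Delta) = 0$. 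On the other hand, the restriction of $y^*$ to $\X \otimes \X$ is the unique $\sigma$-complete homomorphism determined on Borel rectangles by $y^*(A \times B) = [A \cap B]$, which coincides with the pullback of the concrete diagonal embedding $\Delta_\ast \colon x \mapsto (x,x)$; hence $y^*(\Delta) = [\Delta_\ast^{-1}(\Delta)] = [[0,1)] = 1$, a contradiction. The principal technical point is the verification of well-definedness and $\sigma$-completeness for $y_i^*$ on $\Y$, which is exactly where the Bernstein property of $V$ is essential; after that, the rectangle-versus-diagonal contradiction is a short monotonicity argument.
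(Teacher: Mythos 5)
Your proof is correct, but it takes a genuinely different route from the paper's. The paper builds two \emph{disjoint} subsets $Y_1,Y_2\subset[0,1)$ of full outer measure by transfinite recursion over the Borel sets (Lemma \ref{outer}), equips each with the induced $\sigma$-algebra, and then has to work to produce the conditional elements $y_i$: it transports the tautological element through the Boolean isomorphism $\phi_i$ on the dyadic algebra and invokes an extension theorem (Proposition \ref{extension}, proved via the Carath\'eodory--Hahn--Kluvanek theorem for vector measures), with the full-outer-measure property supplying the weak $\sigma$-homomorphism hypothesis (Lemma \ref{wca}). You instead keep $Y_1=Y_2=[0,1)$ and \emph{enlarge} the $\sigma$-algebra by a single Bernstein set $V$; the explicit description $\Y=\{(V\cap B_1)\cup(V^c\cap B_2):B_i\in\X\}$ with $([B_1],[B_2])$ uniquely determined lets you write down $y_i^*$ in closed form (a ``gluing'' of two copies of the canonical map $\X\to\X_\mu$ along the partition $\{a_i,\overline{a_i}\}$), and $\sigma$-completeness is a one-line distributivity check -- no extension machinery at all. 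The two constructions are cousins ($V$ and $V^c$ are themselves disjoint sets of full outer measure), and both finish with the same diagonal contradiction: the putative $y$ kills a product-measurable set containing $\Delta$, while its restriction to $\X\otimes\X$ agrees with the pullback of the concrete diagonal embedding on rectangles, hence everywhere, forcing $y^*(\Delta)=1$. What each approach buys: yours is shorter and essentially self-contained (the only nonconstructive input is the Bernstein set); the paper's detour through Proposition \ref{extension} is deliberate, since that extension theorem is flagged as being of independent interest and is reused to prove Proposition \ref{half-square}. All the intermediate claims you make (the structure of $\Y$, the uniqueness of $([B_1],[B_2])$ via the perfect set property, the forced value of $y^*$ on rectangles, and $\Delta\in\X\otimes\X$) check out.
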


In particular, this proposition demonstrates that the equality
$$ \Cond\left(Y_1\times Y_2\right) = \Cond(Y_1)\times \Cond(Y_2)$$ 
can fail without further hypotheses on $Y_1,Y_2$, such as being a Polish space (as in Proposition \ref{product}) or compact Hausdorff with the Baire $\sigma$-algebra (as in Corollary \ref{square}).  This proposition is not required to prove any of the other results in this paper.

To construct $Y_1,Y_2$ we use

\begin{lemma}[Disjoint sets of full outer measure]\label{outer} There exist disjoint subsets $Y_1,Y_2 \subset X$ such that $Y_1,Y_2$ both have outer measure $1$. (In particular, every subset of $X$ of positive measure has a non-empty intersection with both $Y_1$ and $Y_2$.)
\end{lemma}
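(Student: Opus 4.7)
The plan is to construct $Y_1,Y_2$ by a Bernstein-style transfinite recursion. First I would translate the desired conclusion into a more combinatorial statement: a set $Y \subseteq [0,1)$ has outer measure $1$ if and only if $[0,1) \setminus Y$ contains no Borel set of positive measure, which is equivalent to saying that $Y$ meets every Borel set of positive measure. (If $Y$ missed some Borel set $B$ with $\mu(B) > 0$, then $B^c$ would be a measurable hull of $Y$ with measure $< 1$; conversely, any measurable hull of $Y$ of measure $<1$ has positive-measure complement disjoint from $Y$.) So it suffices to produce disjoint $Y_1, Y_2 \subseteq [0,1)$ each of which intersects every Borel subset of $[0,1)$ of positive measure.

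Next I would enumerate all Borel subsets of $[0,1)$ of positive measure as $(B_\alpha)_{\alpha < \mathfrak{c}}$, where $\mathfrak{c} = 2^{\aleph_0}$; this is possible because the Borel $\sigma$-algebra on $[0,1)$ has cardinality $\mathfrak{c}$. A key observation is that every such $B_\alpha$ has cardinality $\mathfrak{c}$, since by inner regularity it contains a compact set of positive Lebesgue measure, and any uncountable closed subset of $[0,1)$ contains a copy of the Cantor set and hence has cardinality $\mathfrak{c}$.

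Now I would run a transfinite recursion of length $\mathfrak{c}$. At stage $\alpha < \mathfrak{c}$, suppose we have already chosen points $x_\beta^1, x_\beta^2$ for all $\beta < \alpha$, all distinct. The set $S_\alpha := \{x_\beta^i : \beta < \alpha,\ i \in \{1,2\}\}$ of points used so far has cardinality at most $2\cdot|\alpha| < \mathfrak{c}$, whereas $|B_\alpha| = \mathfrak{c}$; hence we may pick two distinct points $x_\alpha^1, x_\alpha^2 \in B_\alpha \setminus S_\alpha$. Setting $Y_i := \{ x_\alpha^i : \alpha < \mathfrak{c} \}$ for $i = 1, 2$ produces disjoint sets, and by construction $Y_i \cap B_\alpha \neq \emptyset$ for every $\alpha < \mathfrak{c}$ and every $i$, yielding the required outer measure property.

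There is no genuine analytic obstacle here; the argument is pure transfinite recursion using the axiom of choice (freely assumed throughout the paper). The only place one needs to be careful is in verifying that arbitrary Borel sets of positive measure have cardinality $\mathfrak{c}$ — this is what ensures that the stock of available points is never exhausted — but this follows immediately from inner regularity together with the fact that uncountable closed subsets of $[0,1)$ are of size $\mathfrak{c}$.
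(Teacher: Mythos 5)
Your proof is correct and takes essentially the same approach as the paper's: both are Bernstein-type transfinite recursions over an enumeration of the Borel sets in order type $2^{\aleph_0}$, relying on the fact that uncountable Borel (or closed) sets contain perfect sets and hence have cardinality $2^{\aleph_0}$ so that the supply of fresh points is never exhausted. The only cosmetic difference is that you enumerate the positive-measure Borel sets and hit each one directly, never reusing a point, whereas the paper enumerates all Borel sets, places points in every co-uncountable complement, and uses Vitali-class bookkeeping for disjointness.
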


Of course, any sets $Y_1,Y_2$ obeying the conclusions of this lemma are necessarily nonmeasurable.
\begin{proof} We partition $X$ into Vitali equivalence classes $X \cap (x+\Q)$ for $x \in \R$.  As Borel sets of $X$ have the cadinality $2^{\aleph_0}$ of the continuum, we may well-order them as $(A_\beta)_{\beta<2^{\aleph_0}}$, where $\beta$ ranges over all ordinals of cardinality less than that of the continuum. By an alternating transfinite recursion\footnote{We learned of this construction from {\tt math.stackexchange.com/questions/157532}.}, construct two disjoint sets $Y_1=\{x_\beta\colon \beta<2^{\aleph_0}\}$ and $Y_2=\{y_\beta\colon \beta<2^{\aleph_0}\}$ such that 
\begin{itemize}
    \item[(i)]  $x_\beta\neq y_\beta$ and $x_\beta$ is not in the same Vitali equivalence class of $x_\gamma$ for $\gamma<\beta$ and similarly $y_\beta$ is not in the same Vitali equivalence class of $y_\gamma$ for $\gamma<\beta$. 
    \item[(ii)]  $x_\beta,y_\beta\in A^c_\beta$ whenever $A^c_\beta$ is uncountable. 
\end{itemize}
One can always select $x_\beta,y_\beta$ at each stage of the recursion because uncountable Borel (or analytic) sets contain perfect sets and hence have cardinality $2^{\aleph_0}$, see e.g., \cite[Theorem 29.1]{kechris1995classical}.  By construction, for any Borel set $A$ such that $Y_1\subset A$ or $Y_2 \subset A$ it follows that $A^c$ is countable, implying that $Y_1,Y_2$ have outer measure $1$.  
\end{proof}

Let $Y_1,Y_2$ be as in the above lemma.  Let ${\mathcal A}$ be the Boolean algebra of $X$ generated by the half-open dyadic intervals $[\frac{j}{2^n}, \frac{j+1}{2^n})$ in $X$, and for $i=1,2$, let $\Sigma_{Y_i}$, ${\mathcal A}_i$ be the restrictions of $\Sigma_X$, ${\mathcal A}$ respectively to $Y_i$.  Clearly each $(Y_i,\Sigma_{Y_i})$ is a concrete measurable space.  Since ${\mathcal A}$ generates $\Sigma_X$ as a $\sigma$-algebra, we see that ${\mathcal A}_i$ generates $\Sigma_{Y_i}$ as a $\sigma$-algebra; also, as $Y_i$ has full outer measure and is therefore dense in $X$, we see that each $A \in {\mathcal A}_i$ arises as $\phi_i(A) \cap Y_i$ for a unique $\phi_i(A) \in {\mathcal A}$. One then easily verifies that $\phi_i \colon {\mathcal A}_i \to \mathcal{A}$ is a Boolean algebra isomorphism.  We have the following key property:

\begin{lemma}[Weak $\sigma$-homomorphism]\label{wca}  Let $i=1,2$.  If $(A_n)_{n \in \N}$ are a family of pairwise disjoint sets in ${\mathcal A}_i$ with $\bigcup_{n=1}^\infty A_n \in {\mathcal A}_i$, then the sets $\bigcup_{n=1}^\infty \phi_i(A_n)$ and $\phi_i( \bigcup_{n=1}^\infty A_n )$ differ by a set of measure zero.  
\end{lemma}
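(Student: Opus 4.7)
The plan is to reduce the claim to showing that a certain Borel set lies in $Y_i^c$ and is therefore null. Set $B \coloneqq \bigcup_{n=1}^\infty A_n \in \mathcal{A}_i$, $V \coloneqq \phi_i(B) \in \mathcal{A}$, and $U \coloneqq \bigcup_{n=1}^\infty \phi_i(A_n)$, which is a Borel subset of $X$ (a countable union of elements of $\mathcal{A}$). We want to show that $U$ and $V$ differ by a null set.

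First I would show $U \subset V$, so that $U \triangle V = V \setminus U$. Since $\phi_i$ is a Boolean algebra isomorphism from $\mathcal{A}_i$ to $\mathcal{A}$, it preserves finite inclusions: from $A_n \subset B$ we get $\phi_i(A_n) \subset \phi_i(B) = V$ for each $n$, and hence $U \subset V$. (I would also note in passing that the $\phi_i(A_n)$ are pairwise disjoint, since $\phi_i(A_m) \cap \phi_i(A_n) = \phi_i(A_m \cap A_n) = \phi_i(\emptyset) = \emptyset$, though this is not strictly needed.)

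Next I would show that $V \setminus U \subset Y_i^c$. Let $y \in V \cap Y_i$. Then $y \in \phi_i(B) \cap Y_i = B = \bigcup_n A_n$, so $y \in A_n$ for some $n$, and therefore $y \in \phi_i(A_n) \cap Y_i \subset U$. This gives $V \cap Y_i \subset U$, hence $V \setminus U \subset Y_i^c$ as claimed.

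Finally, I would conclude from the fact that $Y_i$ has outer measure $1$ in $X = [0,1)$ that $Y_i^c$ has inner measure $0$: every Borel subset of $Y_i^c$ is null, since otherwise $Y_i^c$ would have positive inner measure. The Borel set $V \setminus U$ is contained in $Y_i^c$, so $\mu(V \setminus U) = 0$, which is what we needed. There is no real obstacle here; the only subtlety worth flagging is that $\phi_i$ is only defined as a map on the Boolean algebra $\mathcal{A}_i$ (it does not a priori respect countable joins), so $\phi_i(B)$ and $\bigcup_n \phi_i(A_n)$ are genuinely different objects in $\mathcal{A}$ versus the larger $\sigma$-algebra, and the content of the lemma is precisely that the gap between them is measure-theoretically invisible.
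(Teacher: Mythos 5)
Your proof is correct and follows essentially the same route as the paper: both identify the defect set $\phi_i(\bigcup_n A_n)\setminus\bigcup_n\phi_i(A_n)$ as a Borel set disjoint from $Y_i$ (the paper writes it as $\bigcap_n\phi_i(B_n)$ for the tails $B_n$ and argues by contradiction, you argue by direct containment) and then invoke the full outer measure of $Y_i$ from Lemma~\ref{outer} to conclude it is null. The two presentations differ only cosmetically.
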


\begin{proof}  For each $n$, let $B_n \coloneqq \bigcup_{m=1}^\infty A_m \backslash \bigcup_{m=1}^{n-1} A_m \in {\mathcal A}_i$.  The set $\bigcap_{n=1}^\infty \phi_i(B_n)$ is a Borel measurable subset of $X$.  If it has positive measure, then by Lemma \ref{outer}, it intersects $Y_i$ in at least one point $y$; as $B_n = \phi_i(B_n) \cap Y_i$, we conclude that $y$ lies in each of the $B_n$, which contradicts the fact that $\bigcap_{n=1}^\infty B_n = \emptyset$.  Thus $\bigcap_{n=1}^\infty \phi_i(B_n)$ has measure zero; since $\phi_i( \bigcup_{n=1}^\infty A_n )$ is the disjoint union of $\bigcup_{n=1}^\infty \phi_i( A_n )$ and $\bigcap_{n=1}^\infty \phi_i(B_n)$, we obtain the claim.
\end{proof}

We combine this lemma with the following general extension theorem, which may be of independent interest:

\begin{proposition}[Extension theorem]\label{extension}  Let $(Y,\Sigma_Y)$ be a concrete measurable space, with $\Sigma_Y$ generated by a Boolean algebra $\mathcal{A}$.  Let $(X,\Sigma_X,\mu)$ be a finite measure space, and let $\alpha \colon \mathcal{A} \to X_\mu$ be a Boolean algebra homomorphism.  Then the following are equivalent:
\begin{itemize}
    \item[(i)] (extension to $\sigma$-algebra homomorphism) There exists a unique extension of $\alpha$ to a $\sigma$-complete Boolean algebra homomorphism $\tilde \alpha \colon \Sigma_Y \to X_\mu$.
    \item[(ii)] (weak $\sigma$-homomorphism property) If $(A_n)_{n \in \N}$ are a family of pairwise disjoint sets in ${\mathcal A}$ with $\bigcup_{n=1}^\infty A_n \in {\mathcal A}$, then one has 
\begin{equation}\label{premeasure}
\bigvee_{n=1}^\infty \alpha(A_n) = \alpha\left( \bigcup_{n=1}^\infty A_n \right).
\end{equation}
\end{itemize}  
\end{proposition}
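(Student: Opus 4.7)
The direction (i) $\Rightarrow$ (ii) is immediate: any $\sigma$-complete extension $\tilde\alpha$ satisfies
$$\alpha\left(\bigcup_n A_n\right) = \tilde\alpha\left(\bigcup_n A_n\right) = \bigvee_n \tilde\alpha(A_n) = \bigvee_n \alpha(A_n)$$
whenever $\bigcup_n A_n$ lies in $\mathcal{A}$ (disjointness is not even needed here). For the converse, uniqueness is straightforward: given two $\sigma$-complete extensions $\tilde\alpha_1, \tilde\alpha_2$, the set $\{B \in \Y : \tilde\alpha_1(B) = \tilde\alpha_2(B)\}$ is a $\sigma$-algebra (by $\sigma$-completeness of both homomorphisms) containing $\mathcal{A}$, and hence equals $\Y$.

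For existence, my plan is to carry out a Carathéodory-style construction inside $\X_\mu$. Since $\mu$ is finite, $\X_\mu$ is in fact a complete Boolean algebra (a standard property of measure algebras), so arbitrary meets and joins are available. First, extend $\alpha$ to the class $\mathcal{A}_\sigma$ of countable unions of elements of $\mathcal{A}$ by setting
$$\tilde\alpha\left(\bigcup_n A_n\right) \coloneqq \bigvee_n \alpha(A_n).$$
Well-definedness: after disjointifying via $B_n \coloneqq A_n \setminus (A_1 \cup \cdots \cup A_{n-1}) \in \mathcal{A}$, hypothesis (ii) identifies $\bigvee_n \alpha(A_n)$ with $\bigvee_n \alpha(B_n)$; and if $\bigcup_n A_n \subseteq \bigcup_m A'_m$, then each $B_n = \bigcup_m(B_n \cap A'_m)$ is an increasing union inside $\mathcal{A}$ whose disjointification lies in $\mathcal{A}$, so (ii) yields $\alpha(B_n) \leq \bigvee_m \alpha(A'_m)$, forcing $\bigvee_n \alpha(A_n) \leq \bigvee_m \alpha(A'_m)$, and the reverse inequality by symmetry. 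Then for arbitrary $B \in \Y$ define
$$\tilde\alpha(B) \coloneqq \bigwedge\{ \tilde\alpha(C) : C \in \mathcal{A}_\sigma,\ B \subseteq C \}.$$

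The main obstacle is to verify that this $\tilde\alpha$ is genuinely a $\sigma$-complete Boolean algebra homomorphism on $\Y$, and not merely a monotone outer-approximation. I would follow the classical Carathéodory argument, now transplanted into $\X_\mu$. Call $B \in \Y$ \emph{$\tilde\alpha$-measurable} if
$$\tilde\alpha(E) = \tilde\alpha(E \cap B) \vee \tilde\alpha(E \cap B^c),$$
with the two joinands disjoint in $\X_\mu$, for every $E \in \Y$; and then show (a) the $\tilde\alpha$-measurable sets form a $\sigma$-subalgebra of $\Y$, (b) every element of $\mathcal{A}$ is $\tilde\alpha$-measurable --- this requires first establishing $\sigma$-subadditivity of $\tilde\alpha$ on $\Y$, which in turn reduces via covering arguments to the additivity of $\alpha$ on disjoint families in $\mathcal{A}$ granted by (ii) --- and (c) $\tilde\alpha$ restricted to this $\sigma$-subalgebra preserves complements, countable disjoint joins, and hence all countable joins and meets. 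The real-valued identity $\mu(E \setminus F) + \mu(F) = \mu(E)$ used in classical Carathéodory is replaced here by the Boolean identity $e = (e \wedge f) \vee (e \wedge f^c)$ inside $\X_\mu$, but otherwise the bookkeeping is parallel. Once (a)--(c) are established, the $\tilde\alpha$-measurable $\sigma$-algebra contains $\mathcal{A}$, hence equals $\Y$, and $\tilde\alpha$ is the required $\sigma$-complete extension.
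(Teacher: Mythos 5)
Your direction (i) $\Rightarrow$ (ii) and your uniqueness argument are fine, and the formula you write down for $\tilde\alpha$ does describe the correct extension. The genuine gap is in the existence argument, at precisely the point you flag as ``the main obstacle'' and then defer: the claim that $\sigma$-subadditivity of the outer extension ``reduces via covering arguments to the additivity of $\alpha$ on disjoint families in $\mathcal{A}$'', with the remaining bookkeeping ``parallel'' to classical Carath\'eodory. It does not so reduce. Classically one chooses, for each $B_k$, a cover whose value exceeds the infimum by at most $\eps 2^{-k}$ and unions the covers. In $\X_\mu$ the quantity $\tilde\alpha(B_k)$ is a meet over an uncountable downward-directed family of cover-values which is in general not attained by any single countable cover (a common refinement of infinitely many covers involves infinite intersections, which leave $\mathcal{A}_\sigma$), and there is no ``$\eps$'' with which to select near-optimal covers. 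Combining covers over all $k$ amounts to the interchange $\bigvee_k \bigwedge_j c_{k,j} = \bigwedge_{f} \bigvee_k c_{k,f(k)}$ for columns decreasing in $j$, i.e.\ to weak $(\sigma,\sigma)$-distributivity of the target algebra. This law fails in a general complete Boolean algebra (e.g.\ in the Cohen algebra), and your argument uses nothing about $\X_\mu$ beyond its completeness; if the bookkeeping really were parallel, you would have proved the extension theorem with an arbitrary complete Boolean algebra in place of $\X_\mu$, which is exactly the generality in which the Sikorski-type extension theorem is unavailable (as the paper notes). Weak distributivity does hold for $\X_\mu$, but only because $\mu$ is a finite countably additive measure: in a ccc measure algebra a downward-directed meet is the meet of a decreasing sequence whose measures converge, which is what lets one run the $\eps 2^{-k}$ selection through $\mu$. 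So the finiteness of $\mu$ must enter the existence proof in an essential way that your write-up never uses.

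The paper makes the measure do the work from the outset: it embeds $\X_\mu$ isometrically into $L^1(X_\mu)$ via $a \mapsto 1_a$, regards $A \mapsto 1_{\alpha(A)}$ as a strongly additive vector measure on $\mathcal{A}$, deduces weak countable additivity from \eqref{premeasure}, and applies the Carath\'eodory--Hahn--Kluvanek extension theorem; the extension is obtained as a metric extension from the dense subalgebra $(\mathcal{A},d_\nu)$ of $\Y$, which is why it again takes indicator values. If you wish to keep a hands-on route, the honest repair is the same idea in disguise: first extend the scalar premeasure $\nu \coloneqq \mu \circ \alpha$ (countably additive on $\mathcal{A}$ by \eqref{premeasure}) to $\Y$ by classical Carath\'eodory, then extend $\alpha$ by uniform continuity with respect to the pseudometrics $d_\nu(A,B) = \nu(A \Delta B)$ on $\Y$ and $d_\mu(a,b) = \mu(a \Delta b)$ on $\X_\mu$, using that $\alpha$ is an isometry, that $\mathcal{A}$ is $d_\nu$-dense in $\Y$, and that $(\X_\mu,d_\mu)$ is complete; the Boolean operations are uniformly continuous for these metrics, so the extension is a $\sigma$-complete homomorphism.
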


\begin{proof}  Clearly (i) implies (ii).  Now assume (ii).  The uniqueness of a $\sigma$-complete Boolean algebra homomorphism is clear since $\mathcal{A}$ generates $\Sigma_Y$, so we focus on existence.  By Example \ref{pointless}, $X_\mu$ (viewed as a measure algebra) is not necessarily representable as a $\sigma$-algebra of sets. So we cannot apply the $\sigma$-complete version of the Sikorski extension theorem, see \cite[\S~34]{sikorski2013boolean}.  Instead, we appeal to an extension theorem for vector-valued measures\footnote{See \cite{joseph1977vector} for any unexplained definition or result in the theory of vector measures.}, viewing a $\sigma$-complete Boolean algebra (resp. Boolean algebra) homomorphism as a special type of vector-valued countably additive (resp. finitely additive) measure.  Indeed, observe that $X_\mu$ (viewed as a measure algebra) comes with a natural complete metric $d(a,b) \coloneqq \mu(a\Delta b)$, and therefore can be embedded as a metric space into $L^1(X_\mu)$ by identifying each abstractly measurable subset $a$ of $X_\mu$ with its indicator function $1_a \in L^1(X_\mu)$.  Here $L^1(X_\mu)$ denotes the Banach space of absolutely integrable (abstractly) measurable functions from $X_\mu$ to $\R$ (which can also be identified with the absolutely integrable concretely measurable functions from $(X,\Sigma_X,\mu)$ to $\R$ modulo almost everywhere equivalence, see \cite{fremlin1989measure}).

The map $F \colon \mathcal{A} \to L^1(X_\mu)$ defined by $F(A) \coloneqq 1_{\alpha(A)}$ is a finitely additive  vector measure which is strongly continuous\footnote{That is, $\sum_{n=1}^\infty F(A_n)$ converges in norm whenever $(A_n)$ are pairwise disjoint sets in $\mathcal{A}$.}. By the Carath{\'e}odory-Hahn-Kluvanek extension theorem for vector measures \cite[\S~I.5]{joseph1977vector}, $F$ will have an extension to a countably additive vector measure on $(Y,\Sigma_Y)$ if $F$ is weakly countably additive, that is it obeys the premeasure property $\langle F(\bigcup_{n=1}^\infty A_n),f\rangle =\sum_{n=1}^\infty \langle F(A_n),f\rangle$ (where $\langle \cdot,\cdot \rangle$ denotes the duality pairing between $L^1(X_\mu)$ and $L^\infty(X_\mu)$) for every $f\in L^\infty(X_\mu)$ and every countable family $(A_n)$ of pairwise disjoint sets in $\mathcal{A}$ such that $\bigcup_{n=1}^\infty A_n\in \mathcal{A}_i$.  But this property follows from \eqref{premeasure}, which implies in particular that $\sum_{n=1}^\infty F(A_n)$ converges strongly in $L^1(X_\mu)$ to $F(\bigcup_{n=1}^\infty A_n)$.  Thus we have a countably additive extension $\tilde F \colon \Sigma_Y \to L^1(X_\mu)$.  If $A \in \Sigma_Y$, then $\tilde F(A)$ is necessarily an indicator function $1_{\tilde \alpha(A)}$ in $L^1(X_\mu)$ for some abstractly measurable subset $\tilde \alpha(A) \in X_\mu$ of $X_\mu$, because $\tilde F$ is constructed as a metric extension of a uniformly  continuous function on the dense set $(\mathcal{A},d_\nu)$ where $d_\nu$ is a metric associated to a countably additive finite measure $\nu$ on $\Sigma_Y$ (see the proof of \cite[\S~I.5, Theorem 2]{joseph1977vector} for details).  The map $\tilde \alpha \colon \Sigma_Y \to X_\mu$ then gives the required extension.  
\end{proof}

For $i=1,2$, we apply Proposition \ref{extension} to the Boolean algebra homomorphism $\alpha_i \colon \mathcal{A}_i\to \X_\mu$ defined by $\alpha_i (A) \coloneqq [\phi_i(A)]$ for any $A \in {\mathcal A}_i$. By Lemma \ref{wca}, the property in Proposition \ref{extension}(ii) holds, and thus we can extend $\alpha_i$ to a $\sigma$-complete Boolean algebra homomorphism $\tilde \alpha_i\colon \Y_i\to \X_\mu$, thus $y_i \coloneqq \tilde \alpha_i^\op$ is a conditional element of $Y_i$ for $i=1,2$.  
Now suppose for sake of contradiction that there was a conditional element $y \in \Cond(Y_1 \times Y_2)$ with $\pi_1(y)=y_1$ and $\pi_2(y)=y_2$.  Then for every dyadic interval $I$, we have
$$ y^*( (Y_1 \cap I) \times Y_2 ) = y_1^*(Y_1 \cap I) = \tilde \alpha_1(Y_1 \cap I) = \alpha_1(Y_1 \cap I) = [I]$$
and similarly
$$ y^*( Y_1 \times (Y_2 \cap I) ) = [I]$$
and hence
$$ y^*( (Y_1 \times Y_2) \cap (I \times I) ) = [I].$$
Letting $I$ range over the dyadic intervals of length $\mu(I) = 2^{-n}$ for a given natural number $n$, we conclude that
$$ y^*\left( (Y_1 \times Y_2) \cap \bigcup_{I: \mu(I)=2^{-n}} (I \times I) \right) = 1.$$
Taking intersections in $n$, we conclude that
$$ y^*( (Y_1 \times Y_2) \cap \{ (x,x): x \in X \} ) = 1.$$
But as $Y_1, Y_2$ are disjoint, the intersection $(Y_1 \times Y_2) \cap \{ (x,x): x \in X \}$ is empty.  This contradiction establishes Proposition \ref{counter-prop}.

We close this appendix with a further application of Proposition \ref{extension}, in the spirit of Corollary \ref{square}.

\begin{proposition}[Conditional elements of product spaces, III]\label{half-square}  Let $X = (X,\Sigma_X,\mu)$ be a probability space, let $Y = (Y, \Sigma_Y)$ be a concrete measurable space, and let $K$ be a compact Hausdorff space.  Then $\Cond(Y \times K_\Baire) = \Cond(Y) \times \Cond(K_\Baire)$.
\end{proposition}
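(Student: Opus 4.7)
The forward inclusion $\Cond(Y \times K_\Baire) \hookrightarrow \Cond(Y) \times \Cond(K_\Baire)$ via $z \mapsto (\pi_1 \circ z, \pi_2 \circ z)$ is immediate, and is injective by the identity $z^*(B \times E) = (\pi_1 \circ z)^*(B) \wedge (\pi_2 \circ z)^*(E)$ combined with the fact that rectangles generate $\Y \otimes \Baire(K)$. For the reverse inclusion I note that neither Proposition \ref{product} (both factors Polish) nor Corollary \ref{square} (both factors compact Hausdorff) directly applies, so given $y \in \Cond(Y)$ and $k \in \Cond(K_\Baire)$ the plan is to build the desired $z$ by hand. Let $\mathcal{A}$ be the Boolean algebra of finite disjoint unions of measurable rectangles in $Y \times K_\Baire$; it generates $\Y \otimes \Baire(K)$. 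Define $\alpha \colon \mathcal{A} \to \X_\mu$ by first setting $\alpha(B \times E) := y^*(B) \wedge k^*(E)$ on rectangles and extending by finite additivity (a routine check shows this is a well-defined Boolean homomorphism). Then I would invoke Proposition \ref{extension} to lift $\alpha$ to a $\sigma$-complete Boolean homomorphism $z^* \colon \Y \otimes \Baire(K) \to \X_\mu$, which produces $z = (z^*)^\op$.

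The heart of the proof is verifying the weak $\sigma$-additivity hypothesis of Proposition \ref{extension}, equivalently (using finiteness of $\mu$ and the faithfulness of $\mu$ on the measure algebra $\X_\mu$) showing $\mu(\alpha(A_n)) \to 0$ whenever $(A_n) \subset \mathcal{A}$ is decreasing with $\bigcap_n A_n = \emptyset$. Fix $\epsilon > 0$ and write $A_n = \bigsqcup_{i=1}^{N_n} B_{n,i} \times E_{n,i}$. The first key ingredient is the inner regularity of the finite Baire measure $k_* \mu$ on the compact Hausdorff space $K$ (every Baire set is approximable from within by compact Baire sets, i.e., zero sets of continuous real functions), which lets me choose compact Baire $K_{n,i} \subset E_{n,i}$ with $\mu(k^*(E_{n,i} \setminus K_{n,i})) < \epsilon/(2^n N_n)$, yielding a ``compactified'' approximation $A_n' := \bigsqcup_i B_{n,i} \times K_{n,i} \in \mathcal{A}$ satisfying $A_n' \subset A_n$ and $\mu(\alpha(A_n \setminus A_n')) < \epsilon/2^n$.

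Setting $A_n'' := \bigcap_{m \leq n} A_m'$ produces (by distributivity) a decreasing sequence in $\mathcal{A}$, each term a finite disjoint union of rectangles with compact Baire $K$-parts, with $A_n'' \subset A_n$, $\bigcap_n A_n'' = \emptyset$, and (by telescoping) $\mu(\alpha(A_n \setminus A_n'')) \leq \epsilon$. The compactness of $K$ now enters decisively: for each $b \in Y$, the section $(A_n'')_b \subset K$ is a finite union of closed, hence compact, subsets of the compact space $K$, and these form a decreasing sequence of compact sets with empty intersection, so the finite intersection property forces $(A_n'')_b = \emptyset$ for all sufficiently large $n$. Consequently, the projections $\pi_Y(A_n'') \in \Y$ (finite unions of the $Y$-parts of the rectangles in $A_n''$, after discarding any with empty $K$-part) form a decreasing sequence of measurable subsets of $Y$ with $\bigcap_n \pi_Y(A_n'') = \emptyset$, and the $\sigma$-additivity of $y^*$ gives $\bigwedge_n y^*(\pi_Y(A_n'')) = 0$, hence $\mu(y^*(\pi_Y(A_n''))) \to 0$. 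Combined with the elementary bound $\alpha(A_n'') \leq y^*(\pi_Y(A_n''))$, this yields $\mu(\alpha(A_n'')) \to 0$, so $\limsup_n \mu(\alpha(A_n)) \leq \epsilon$, and letting $\epsilon \to 0$ completes the verification.

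The hard part is precisely this $\sigma$-additivity verification, in which the finiteness of $\mu$, the inner regularity of Baire measures on compact Hausdorff spaces, the finite intersection property in compact $K$, and the $\sigma$-additivity of $y^*$ must be braided together. Once $z^*$ is obtained from Proposition \ref{extension}, the identities $z^*(B \times K) = y^*(B)$ and $z^*(Y \times E) = k^*(E)$ give $\pi_1 \circ z = y$ and $\pi_2 \circ z = k$, completing the proof.
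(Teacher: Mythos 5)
Your proof is correct, and its skeleton necessarily coincides with the paper's: the same reduction to extending the finitely additive homomorphism $\alpha(E \times F) \coloneqq y^*(E) \wedge k^*(F)$ from the algebra $\mathcal{A}$ of finite disjoint unions of rectangles, via Proposition \ref{extension}. But the verification of the weak $\sigma$-additivity hypothesis---which both you and the paper identify as the heart of the matter---proceeds by a genuinely different, essentially dual, route. You recast the hypothesis as continuity at $\emptyset$ along decreasing sequences (a correct reformulation, using finiteness and faithfulness of $\mu$ on $\X_\mu$), shrink the $K$-sides of the rectangles to compact zero sets by \emph{inner} regularity of the finite Baire measure $k_*\mu$ on the compact Hausdorff space $K$, and let compactness act through the finite intersection property: the compact sections $(A_n'')_b$ are eventually empty, so the $Y$-projections decrease to $\emptyset$ and the $\sigma$-completeness of $y^*$ finishes. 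The paper instead keeps the disjoint-cover formulation, first pushes $K$ forward to a single compact \emph{metric} space (possible because the countably many Baire sets $F_n$ all lie in the $\sigma$-algebra generated by one continuous map), enlarges the $F_n$ to open sets by \emph{outer} regularity of Borel measures on compact metric spaces, and lets compactness act through finite subcovers, organized into a countable partition $(E'_I)_{I \in \mathcal{F}}$ of $Y$. The trade-off: your argument avoids the metrizability reduction and handles the geometry more directly, but it leans on the stronger (though standard) theorem that finite Baire measures on compact Hausdorff spaces are inner regular with respect to compact zero sets---this is the one input you should either cite (e.g.\ Bogachev, Volume 2) or prove by a generating-class argument, since the paper's metrizable reduction deliberately gets by with the more elementary regularity of Borel measures on compact metric spaces. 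All your intermediate steps (the well-definedness of $\alpha$, the $\eps/2^n$ compactification and telescoping, the measurability of the projections $\pi_Y(A_n'')$, the bound $\alpha(A_n'') \leq y^*(\pi_Y(A_n''))$, and the final identification $\pi_1 \circ z = y$, $\pi_2 \circ z = k$) check out.
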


\begin{proof} We need to show that for any $y \in \Cond(Y)$ and $k \in \Cond(K_\Baire)$ there exists a unique $\sigma$-complete Boolean homomorphism $\alpha \colon \Sigma_Y \otimes \Baire(K) \to X_\mu$ such that $\alpha(E) = y^*(E)$ for all $E \in \Sigma_Y$ and $\alpha(F) = k^*(F)$ for all $F \in \Baire(K)$, where we view $\Sigma_Y$ and $\Baire(K)$ as subalgebras of the $\sigma$-algebra $\Sigma_Y \otimes \Baire(K)$.

Let ${\mathcal A}$ be the Boolean subalgebra of $\Sigma_Y \otimes \Baire(K)$ whose elements consist of finite disjoint unions of ``rectangles'' $E \times F$ where $E \in \Sigma_Y$, $F \in \Baire(K)$.  Clearly there is a unique Boolean algebra homomorphism $\alpha \colon {\mathcal A} \to X_\mu$ such that $\alpha(E \times F) = y^*(E) \wedge k^*(F)$ for any $E \in \Sigma_Y$, $F \in \Baire(K)$.  Since ${\mathcal A}$ generates $\Sigma_Y \otimes \Baire(K)$ as a $\sigma$-algebra, it suffices by Proposition \ref{extension} to show that whenever $(A_n)_{n \in \N}$ are a family of disjoint subsets of ${\mathcal A}$ such that $\bigcup_{n=1}^\infty A_n \in {\mathcal A}$, that
$$ \alpha\left( \bigcup_{n=1}^\infty A_n \right) = \bigvee_{n=1}^\infty \alpha(A_n).$$
By adding the complement of $\bigcup_{n=1}^\infty A_n$ to the $A_n$, we may assume that $\bigcup_{n=1}^\infty A_n = Y \times K$.  By breaking up each $A_n$ into rectangles we may assume that $A_n = E_n \times F_n$ with $E_n \in \Sigma_Y$ and $F_n \in \Baire(K)$.  Thus the $E_n \times F_n$ form a partition of $Y \times K$, and it suffices to show that
$$ \bigvee_{n=1}^\infty y^*(E_n) \wedge k^*(F_n) = 1.$$
By definition of $X_\mu$, it suffices to show that
$$ \mu\left( \bigvee_{n=1}^\infty y^*(E_n) \wedge k^*(F_n) \right) \geq 1 - \eps$$
for any $\eps>0$.

Fix $\eps$. By definition of the Baire $\sigma$-algebra, each $F_n$ lies in the $\sigma$-algebra generated by a continuous map to a compact metric space; since the product of countably many compact metric spaces is metrizable, we can place all the $F_n$ in a $\sigma$-algebra generated by a continuous map to a single compact metric space $S$.  We can then push forward $K$ to $S$, thus we may assume without loss of generality that $K$ is a compact metric space, so $\Baire(K)$ is now the Borel $\sigma$-algebra.  The pushforward measure $k_* \mu$ is then a Borel probability measure on the compact metric space $K$, and hence regular (see e.g.~\cite[Theorem 1.1]{billingsley2013convergence}). In particular, we can find an open neighborhood $U_n$ of $F_n$ in $K$ for each $n$ such that
$$y^*(U_n \backslash F_n) \leq \frac{\eps}{2^n}$$
and so it will suffice to show that
$$ \mu\left( \bigvee_{n=1}^\infty y^*(E_n) \wedge k^*(U_n) \right) \geq 1.$$
By construction, we have
$$ \bigcup_{n=1}^\infty E_n \times U_n = Y \times K.$$
Equivalently, for each $y \in Y$, the sets $\{ U_n: y \in E_n \}$ form an open cover of $K$.  As $K$ is compact, we thus see that for each $y \in Y$ there exists a finite subset $I \subset \{ n \in \N: y\in E_n \}$ such that $\bigcup_{n \in I} U_n = K$.  To put this another way, if we let ${\mathcal F}$ denote the collection of all finite subsets $I \subset \N$ with $\bigcup_{n \in I} U_n = K$, then we have
$$ \bigcup_{I \in {\mathcal F}} \bigcap_{n \in I} E_n = Y.$$
As ${\mathcal F}$ is at most countable, we can totally order it so that every element has finitely many predecessors.  If for each $I \in {\mathcal F}$ we set
$$ E'_I \coloneqq \bigcap_{n \in I} E_n \backslash \bigcup_{J<I} \bigcap_{n \in J} E_J$$
then the $E'_I$ form an at most countable partition of $Y$ into measurable sets, hence the $y^*(E'_I)$ are an at most countable partition of $1$ in $X_\mu$.  It thus suffices to show that
$$ \mu\left( \bigvee_{n=1}^\infty y^*(E_n) \wedge k^*(U_n) \wedge y^*(E'_I) \right) \geq \mu(y^*(E'_I))$$
for every $I$.  But we have
$$ \bigvee_{n=1}^\infty y^*(E_n) \wedge k^*(U_n) \wedge y^*(E'_I) \geq  \bigvee_{n \in I} k^*(U_n) \wedge y^*(E'_I) \geq y^*(E'_I)$$
since the $U_n, n \in I$ are a finite cover of $K$, and the claim follows.
\end{proof}

\bibliographystyle{abbrv}
\bibliography{bibliography}

\end{document}